\useunder{\uline}{\ul}{}
\theoremstyle{plain}
\newtheorem{teo}{}[section]
\newtheorem{prop}[teo]{Proposition}
\newtheorem{cor}[teo]{Corollary}
\newtheorem{thm}[teo]{Theorem}
\theoremstyle{definition}
\newtheorem{ex}[teo]{Example}
\newtheorem{rem}[teo]{Remark}
\newtheorem{df}[teo]{Definition}
\newcommand\blfootnote[1]{%
  \begingroup
  \renewcommand\thefootnote{}\footnote{#1}%
  \addtocounter{footnote}{-1}%
  \endgroup
}
\definecolor{cof}{RGB}{219,144,71}
\definecolor{pur}{RGB}{186,146,162}
\definecolor{greeo}{RGB}{91,173,69}
\definecolor{greet}{RGB}{52,111,72}
\title{Matrix Invariants as Homotopy Invariants in Finite $T_0$-spaces }
\author{Pedro J. Chocano}
\date{}
\begin{document}
\maketitle

\begin{abstract}
We establish a bijection between the set of finite topological $T_0$-spaces (or  partially ordered sets) and equivalence classes of square matrices. The absolute value of the determinant or the rank of these matrices serve as simple homotopy invariants for the corresponding topological spaces, and consequently, for finite simplicial complexes. To conclude, we explore further relationships and problems concerning finite posets within the context of these matrices.
\end{abstract}

\section{Introduction}\label{sec:introduccion}
\blfootnote{2020  Mathematics  Subject  Classification:  	06A11, 57Q10, 55P10, 05B20.}
\blfootnote{Keywords: Posets, Alexandroff spaces, simplicial complexes, homotopy, simple homotopy, matrices.}

The objective of this paper is to associate each finite topological $T_0$-space--or equivalently, each partially ordered set (poset)--with a square matrix, potentially non-invertible, that encodes homotopical information. While connections between posets and matrices have been previously explored (see \cite{ivariant1994Ochiai,structure1994Ochiai,quasis1966sharp,stong1966finite}), to the best of the author’s knowledge, none of these approaches utilize natural matrix invariants to derive results about homotopy. This limitation stems from the fact that the matrices considered in those works are typically congruent to lower triangular matrices with 1’s on the diagonal.

There are, however, well-established associations between finite topological $T_0$-spaces and simplicial complexes that have proven fruitful in algebraic topology. Notably, there exists a functor $\mathcal{K}$ from the category of finite topological $T_0$-spaces to the category of finite simplicial complexes, and a functor $\mathcal{X}$ in the reverse direction (see \cite{barmak2011algebraic,may1966finite}).These constructions not only allow for the translation of topological questions into combinatorial ones, but also enable the definition of new invariants for simplicial complexes through the study of finite topological spaces.

The notion of simple homotopy, introduced by J.H.C. Whitehead for simplicial complexes \cite{whitehead1939simplicial,whitehead1941onincidence,whitehead1950simple}, provides a geometric refinement of homotopy theory (see also \cite{cohen1970course}). Although homotopy and simple homotopy are not generally equivalent, under certain conditions they coincide: for simplicial complexes $K$ and $L$, a homotopy equivalence $f : |K| \to |L|$ is a simple homotopy equivalence if and only if the Whitehead group of $K$ is trivial. More recently, J.A. Barmak and E.G. Minian adapted this concept to finite topological $T_0$-spaces  \cite{barmak2008simple}, showing that the functors $\mathcal{K}$ and $\mathcal{X}$ induce a bijection between simple equivalence classes of finite topological $T_0$-spaces and simple homotopy types of finite simplicial complexes (\cite[Corollary 3.11]{barmak2008simple}).

In this work, we introduce a class of matrices associated with finite topological $T_0$-spaces and show that the absolute value of their determinant is a simple homotopy invariant for these spaces. Moreover, using the functors $\mathcal{K}$ and $\mathcal{X}$ together with the results of \cite{barmak2008simple}, this invariant also applies to finite simplicial complexes. It is worth noting that an earlier draft of this paper was anonymously refereed for a journal, and one of the comments based on the examples of the previous draft conjectured whether this number coincides with the reduced Euler characteristic, up to sign. Recently, this conjecture has been confirmed (see \cite{chocano2025reduced}); that is, the determinant of these matrices corresponds to the reduced Euler characteristic of the associated finite space. Consequently, some results in this paper—specifically those concerning the homotopy invariance of the determinant—can be regarded as an alternative proof of the fact that the Euler characteristic is a homotopy invariant. In this case, the proof makes use exclusively of linear algebraic combinatorial techniques.

Moreover, we show that the difference between the order of the matrix and its rank also constitutes a simple homotopy invariant. In contrast, other algebraic invariants--such as the characteristic polynomial--are shown to be topological but not homotopy invariants as well as eigenvalues. Additionaly, we find some connections between these matrices and notions from posets and graphs.

The reader is assumed to be familiar with the theory of Alexandroff spaces; for general background, we refer to \cite{barmak2011algebraic,may1966finite}. Nevertheless, basic definitions and notation are recalled in Section~\ref{sec:preliminaries}. In Section~\ref{sec:finite_matrix_graphs}, we establish a bijection between posets and matrices, relate them to a class of graphs, and examine their structural properties. Section~\ref{sec:invariantes} focuses on matrix invariants, including the determinant, rank, and characteristic polynomial. Finally, Section~\ref{sec:other_aspects} discusses further connections between finite spaces and the matrices introduced earlier.
\section{Preliminaries}\label{sec:preliminaries}
Let $X$ be a finite $T_0$-space and $x \in X$. Define:
\begin{itemize}
    \item $U_x$ as the intersection of all open sets containing $x$,
    \item $F_x$ as the intersection of all closed sets containing $x$,
    \item $\hat{U}_x = U_x \setminus \{x\}$,
    \item $\hat{F}_x = F_x \setminus \{x\}$,
    \item $C_x = U_x \cap F_x$.
\end{itemize}
Set $x \leq y$ if and only if $U_x \subseteq U_y$ for $x, y \in X$. Then $(X, \leq)$ is a partially ordered set (poset). Conversely, given a finite poset $(X, \leq)$, the set of lower sets (i.e., subsets $S \subseteq X$ such that if $y \leq x$ and $x \in S$, then $y \in S$) forms a basis for a $T_0$-topology on $X$. These two constructions are mutually inverse (see \cite{alexandroff1937diskrete}). For a finite poset $X$ with $n$ points, we fix a labelling $X=\{ x_i\}_{i=1,...,n}$. For simplicity, we denote $X=\{x_i\}$ when no confusion arises.

Notice that given a finite $T_0$-space, one can also consider $x\leq_o y$ if and only if $U_y\subseteq U_x$. This relation is a partial order on $X$. Analogously, given a finite poset $(X,\leq)$ the set of upper sets (recall that an upper set $S\subseteq X$ is a set satisfying that if $x\leq y$ and $x\in S$ then $y\in S$) forms a basis for a $T_0$-topology on $X$.  When we consider this construction, we say that the poset $(X,\leq_o)$ has the opposite order and the topology of $X$ is the opposite topology.

Moreover, a map $f: X \to Y$ between finite $T_0$-spaces is continuous if and only if it is order-preserving. Therefore, the category of finite $T_0$-spaces is isomorphic to the category of finite posets. From now on, we will not distinguish between finite posets and finite $T_0$-spaces.

Let $X$ be a finite poset. The \emph{height} of $X$, denoted $\mathrm{ht}(X)$, is one less than the maximum number of elements in a chain of $X$. The height of a point $x \in X$ is defined as the height of $U_x$. The \emph{width} of $X$, denoted $\mathrm{width}(X)$, is the maximum size of an antichain in $X$. 

We write $x \prec y$ if $x < y$ and there is no $z$ such that $x < z < y$. The Hasse diagram of $X$ is a directed graph whose vertices are the elements of $X$, and there is an edge from $x$ to $y$ if $x \prec y$.

We now recall key concepts for studying the homotopy and weak homotopy types of finite topological spaces.

\begin{df} A point $x$ in a finite $T_0$-space $X$ is a down (up) beat point if $\hat{U}_x$ has a maximum ($\hat{F}_x$ has a minimum).
\end{df}

\begin{rem}\label{rem:up_son_down_opuesto} Let $X$ be a finite $T_0$-space. An up (down) beat point $x$ in $X$ is a down (up) beat point in $X$ with the opposite topology.
\end{rem}

\begin{thm}[\cite{stong1966finite}] Let $X$ be a finite $T_0$-space and let $x\in X$ be a beat point. Then $X\setminus \{x\}$ is a strong deformation retract of $X$. 
\end{thm}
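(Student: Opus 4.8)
The plan is to produce an explicit retraction and then homotope it to the identity using the pointwise order on maps. I would first reduce to a single case: by Remark \ref{rem:up_son_down_opuesto}, an up beat point of $X$ is a down beat point of $X$ with the opposite topology, so it suffices to treat a down beat point carefully and let the up beat point case follow by the dual argument (or by running the same homotopy construction with the inequality reversed). So assume $x$ is a down beat point and let $y=\max \hat{U}_x$, which exists by hypothesis; note $y<x$, and every $z<x$ satisfies $z\le y$.

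Next I would define $r:X\rightarrow X\setminus\{x\}$ by $r(x)=y$ and $r(z)=z$ for $z\neq x$, and check that $r$ is order-preserving, hence continuous. The only comparisons that need verification are those involving $x$: if $a<x$ then $a\in\hat{U}_x$, so $r(a)=a\le y=r(x)$; and if $x<b$ then $r(x)=y<x<b=r(b)$. Writing $i:X\setminus\{x\}\hookrightarrow X$ for the inclusion, $r$ is clearly a retraction, so $r\circ i=\mathrm{id}_{X\setminus\{x\}}$, and the remaining task is to show $i\circ r\simeq \mathrm{id}_X$ relative to $X\setminus\{x\}$.

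The key observation is that $i\circ r\le \mathrm{id}_X$ pointwise: the two maps agree off $x$, and at $x$ we have $(i\circ r)(x)=y\le x$. The technical heart of the proof is the standard lemma for finite spaces that two maps $f,g\colon X\to Y$ with $f\le g$ (meaning $f(z)\le g(z)$ for all $z$) are homotopic by a homotopy relative to the set $A=\{z:f(z)=g(z)\}$. I would prove this lemma by the explicit formula
\[
H(z,t)=\begin{cases} f(z) & 0\le t<1,\\ g(z) & t=1,\end{cases}
\]
and verify continuity directly: for an open (hence down-closed) set $U\subseteq Y$, the inequality $f\le g$ forces $g^{-1}(U)\subseteq f^{-1}(U)$, whence
\[
H^{-1}(U)=\bigl(f^{-1}(U)\times[0,1)\bigr)\cup\bigl(g^{-1}(U)\times\{1\}\bigr)
\]
is open in $X\times[0,1]$. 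By construction $H(z,t)=f(z)=g(z)$ is constant in $t$ whenever $z\in A$, so the homotopy is relative to $A$.

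Applying this lemma with $f=i\circ r$, $g=\mathrm{id}_X$, and $A=X\setminus\{x\}$ yields a homotopy $i\circ r\simeq \mathrm{id}_X$ rel $X\setminus\{x\}$, which together with $r\circ i=\mathrm{id}_{X\setminus\{x\}}$ exhibits $X\setminus\{x\}$ as a strong deformation retract of $X$. The up beat point case is identical after replacing the inequality $i\circ r\le\mathrm{id}_X$ by $\mathrm{id}_X\le i\circ r$, with $y=\min\hat{F}_x$. I expect the only delicate point to be the continuity check for $H$ at the slice $t=1$; everything else is a routine verification of order-preservation. The essential input is simply that a beat point is comparable to, and dominated (or dominating) by, a canonical neighbour, which is exactly what the beat point hypothesis guarantees.
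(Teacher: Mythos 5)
The paper does not actually prove this theorem: it is recalled in the Preliminaries as background and attributed to Stong \cite{stong1966finite}, so there is no in-paper argument to compare against. Judged on its own merits, your proof is correct and complete, and it is the standard argument for this result (the one in Stong's paper and in Barmak's book): collapse the beat point $x$ onto its canonical neighbour $y$ via the retraction $r$, observe that $i\circ r$ is comparable with $\mathrm{id}_X$, and invoke the order-homotopy lemma that pointwise-comparable maps are homotopic relative to their agreement set, proved by the two-stage homotopy. Your reductions are sound: $r$ is order-preserving, $i\circ r\le \mathrm{id}_X$ for a down beat point, and the up beat point case needs no transfer of homotopies across opposite topologies because the lemma applies symmetrically with $\mathrm{id}_X\le i\circ r$. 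One presentational caveat: as displayed, the set $H^{-1}(U)=\bigl(f^{-1}(U)\times[0,1)\bigr)\cup\bigl(g^{-1}(U)\times\{1\}\bigr)$ is not visibly a union of open sets, since $g^{-1}(U)\times\{1\}$ alone is not open in $X\times[0,1]$; its openness uses exactly the containment $g^{-1}(U)\subseteq f^{-1}(U)$ that you establish, which lets you rewrite the set as $\bigl(f^{-1}(U)\times[0,1)\bigr)\cup\bigl(g^{-1}(U)\times[0,1]\bigr)$. Making that last rewriting explicit would render the continuity check at the slice $t=1$ airtight, which is indeed the only delicate point of the whole argument.
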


\begin{df} A finite $T_0$-space is a minimal finite space if it has no beat points. A core of a finite $T_0$-space is a strong deformation retract which is a minimal finite space.
\end{df}

Note that every finite $T_0$-space has a core \cite[Theorem 2]{stong1966finite}.
\begin{thm}[\cite{stong1966finite}]\label{thm:stong_cores_homeomorphic} Let $X$ and $Y$ be finite $T_0$-spaces and with cores $X_1$, $Y_1$. Then $X$ is homotopy equivalent to $Y$ if and only if $X_1$ is homeomorphic to $Y_1$.
\end{thm}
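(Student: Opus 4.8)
The statement is an equivalence, and its reverse implication is immediate: if the cores $X_1$ and $Y_2$ are homeomorphic, then since each core is by definition a strong deformation retract of its ambient space (guaranteed by the preceding beat-point theorem), we obtain homotopy equivalences $X\simeq X_1\cong Y_2\simeq Y$, so $X\simeq Y$. The entire weight of the theorem lies in the forward direction, which I would reduce to the following key assertion: a homotopy equivalence between minimal finite spaces is a homeomorphism. Granting this, if $X\simeq Y$ then the chain $X_1\simeq X\simeq Y\simeq Y_2$ exhibits a homotopy equivalence between the minimal finite spaces $X_1$ and $Y_2$, which must then be a homeomorphism.

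To prove the key assertion, I would first recall (or establish) the combinatorial description of homotopy for maps of finite spaces: two continuous maps $f,g\colon X\to Y$ are homotopic if and only if they can be joined by a fence $f=f_0,f_1,\dots,f_n=g$ in which consecutive maps are comparable in the pointwise order. This rests on the fact that $Y^{X}$ is again a finite poset and that its path-components coincide with its sets of fence-connected maps. The core step is then a single-step lemma: if $X$ is minimal and $f\colon X\to X$ satisfies $f\le \mathrm{id}_X$ (respectively $f\ge \mathrm{id}_X$), then $f=\mathrm{id}_X$.

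I expect this lemma to be the main obstacle, and I would prove it by a minimality argument producing a beat point. Suppose $f\le \mathrm{id}_X$ but $f\neq \mathrm{id}_X$, and choose $x$ minimal in $X$ with $f(x)\neq x$, so that $f(x)<x$. For every $y<x$, minimality forces $f(y)=y$, and order-preservation gives $y=f(y)\le f(x)$; since moreover $f(x)\in \hat{U}_x$, the element $f(x)$ is the maximum of $\hat{U}_x$, so $x$ is a down beat point, contradicting that the minimal space $X$ has no beat points. The case $f\ge \mathrm{id}_X$ is dual, using up beat points together with Remark \ref{rem:up_son_down_opuesto} on the opposite topology.

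Finally I would assemble the pieces. From $f\simeq \mathrm{id}_X$ take the fence $\mathrm{id}_X=f_0,f_1,\dots,f_n=f$ and propagate along it: $f_1$ is comparable to $f_0=\mathrm{id}_X$, hence $f_1=\mathrm{id}_X$ by the lemma, and inductively each $f_{i+1}$ is comparable to $f_i=\mathrm{id}_X$ and so equals $\mathrm{id}_X$, giving $f=\mathrm{id}_X$. Applying this to a homotopy equivalence $f\colon X_1\to Y_2$ with homotopy inverse $g$, the relations $gf\simeq \mathrm{id}_{X_1}$ and $fg\simeq \mathrm{id}_{Y_2}$ upgrade to $gf=\mathrm{id}_{X_1}$ and $fg=\mathrm{id}_{Y_2}$, so $f$ is a continuous bijection with continuous inverse, i.e.\ a homeomorphism. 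This establishes the key assertion and hence the theorem.
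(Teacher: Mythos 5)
Your proof is correct. Note that the paper itself gives no argument for this statement: it is quoted as a known result of Stong (\cite{stong1966finite}), so there is no in-paper proof to compare against; what you have written is essentially the classical argument of Stong (the one also presented in Barmak's book). The reduction to the key assertion, the fence description of homotopy in finite mapping spaces, the single-step lemma producing a beat point from a comparable-to-identity map on a minimal space, and the propagation along the fence are all exactly the standard steps, and each is carried out correctly; the only imported ingredient you rely on is the identification of homotopy classes of maps with fence-components of the finite poset $Y^X$, which you correctly flag as the fact to recall or establish.
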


A continuous map $f:X\rightarrow Y$ between finite topological spaces is  a weak homotopy equivalence if it induces isomorphism in all homotopy groups. Recall that any weak homotopy equivalence between topological spaces induces also isomorphisms in all homology groups.  Two spaces $X$ and $Y$ are weak homotopy equivalent if there exists a sequence of spaces $X=X_0,X_1,...,X_n=Y$ and there are weak homotopy equivalences $X_i\rightarrow X_{i+1}$ or $X_{i+1}\rightarrow X_i$ where $i=0,...,n-1$.

Let $X$ be a topological space, a finite space $Y$ is a finite model of $X$ if it is weak homotopy equivalent to $X$. Moreover, $Y$ is a minimal finite model if it is a finite model of minimum cardinality. 

\begin{df} A point $x$ in a finite $T_0$-space $X$ is a down (up) weak beat point if $\hat{U}_x$ is contractible ($\hat{F}_x$ is contractible).
\end{df}
\begin{thm}[\cite{barmak2008simple}] Let $X$ be a finite $T_0$-space and let $x\in X$ be a weak beat point. Then the inclusion $i:X\setminus \{x\}\rightarrow X$ is a weak homotopy equivalence. 
\end{thm}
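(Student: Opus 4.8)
The plan is to apply McCord's theorem (see \cite{barmak2011algebraic,may1966finite}), which states that a continuous map $f\colon X\to Y$ is a weak homotopy equivalence whenever there is a basis-like open cover $\mathcal{U}$ of $Y$ such that each restriction $f\colon f^{-1}(U)\to U$ is a weak homotopy equivalence. I would take $Y=X$, $f=i$ the inclusion, and $\mathcal{U}=\{U_y\}_{y\in X}$ the minimal basis of the finite $T_0$-space $X$. It suffices to treat the case where $x$ is a down weak beat point, i.e.\ $\hat{U}_x$ is contractible; the up case will follow by duality at the end. Thus the whole argument reduces to checking, for every $y\in X$, that the inclusion $i^{-1}(U_y)=U_y\setminus\{x\}\hookrightarrow U_y$ is a weak homotopy equivalence.

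First I would dispose of the easy cases. If $x\notin U_y$ then $i^{-1}(U_y)=U_y$ and the restriction is the identity. If $x\in U_y$ but $x\neq y$, then $y$ is the maximum of $U_y$ and remains the maximum of $U_y\setminus\{x\}$; a finite $T_0$-space with a maximum is contractible (the identity and the constant map at the maximum are comparable, hence homotopic), so both $U_y$ and $U_y\setminus\{x\}$ are contractible, and an inclusion between two spaces with trivial homotopy groups is a weak homotopy equivalence. The only remaining and genuinely substantive case is $y=x$: here $U_x$ is contractible because it has maximum $x$, while $i^{-1}(U_x)=U_x\setminus\{x\}=\hat{U}_x$ is contractible precisely by the hypothesis that $x$ is a down weak beat point. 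Again both spaces are weakly contractible, so the inclusion is a weak homotopy equivalence. McCord's theorem then yields that $i$ is a weak homotopy equivalence.

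Finally I would handle the up weak beat point case by passing to the opposite topology. If $x$ is an up weak beat point of $X$, then by Remark \ref{rem:up_son_down_opuesto} it is a down weak beat point of $X$ with the opposite order, so the argument above shows $X^{\mathrm{op}}\setminus\{x\}\hookrightarrow X^{\mathrm{op}}$ is a weak homotopy equivalence. Since the order complex satisfies $\mathcal{K}(X)=\mathcal{K}(X^{\mathrm{op}})$ and $\mathcal{K}(X\setminus\{x\})=\mathcal{K}(X^{\mathrm{op}}\setminus\{x\})$, with $\mathcal{K}(i)$ the same simplicial map in both cases, and since each space is naturally weak homotopy equivalent to the geometric realization of its order complex (McCord), the inclusion $i\colon X\setminus\{x\}\to X$ is a weak homotopy equivalence as well. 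The main obstacle is not any single computation but correctly setting up McCord's theorem with the minimal basis and isolating the one case ($y=x$) where the contractibility hypothesis on $\hat{U}_x$ is actually used; everything else is forced by the presence of a maximum element in the relevant minimal open sets.
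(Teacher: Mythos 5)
Your proof is correct. The paper itself gives no proof of this statement --- it is quoted in the Preliminaries as a known result from \cite{barmak2008simple} --- and your argument (McCord's theorem applied to the minimal basis $\{U_y\}_{y\in X}$, where every restriction $U_y\setminus\{x\}\hookrightarrow U_y$ is a map between contractible spaces, the weak-beat-point hypothesis being needed only for $y=x$, with the up case handled via the opposite order and $\mathcal{K}(X)=\mathcal{K}(X^{\mathrm{op}})$) is essentially the standard proof in that cited reference, so there is nothing to contrast.
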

Two finite $T_0$-spaces $X$ and $Y$ are simple homotopy equivalent if one can be obtained from the other by adding and removing weak beat points one by one. 

We recall two key construction in the theory of finite topological spaces. These constructions connect the classical theory of simplicial complexes (and its algebraic topology) with the theory of finite posets.
\begin{df} Let $X$ be a finite $T_0$-space. The order complex $\mathcal{K}(X)$ is the simplicial complex whose simplices are the nonempty chains of $X$.
\end{df}
\begin{thm}[\cite{mccord1966singular}] Let $X$ be a finite $T_0$-space. Then there exists a weak homotopy equivalence $f_X:|\mathcal{K}(X)|\rightarrow X$.
\end{thm}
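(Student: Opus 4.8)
The plan is to realise $f_X$ concretely and then upgrade a local statement to a global one via McCord's theorem on basis-like covers. First I would define the map: a point $\alpha\in|\mathcal{K}(X)|$ lies in the interior of a unique simplex, spanned by a chain $\mathrm{supp}(\alpha)=\{x_{0}<x_{1}<\dots<x_{k}\}$, and I set $f_X(\alpha)=x_{0}=\min\mathrm{supp}(\alpha)$. To check continuity it suffices to test preimages of the minimal basis $\{U_x\}_{x\in X}$, since these down-sets generate the topology of $X$. A point $\beta$ near $\alpha$ lies in an open simplex having $\mathrm{supp}(\alpha)$ as a face, so $\mathrm{supp}(\beta)\supseteq\mathrm{supp}(\alpha)$ and hence $\min\mathrm{supp}(\beta)\le\min\mathrm{supp}(\alpha)$; from this one reads off that $f_X^{-1}(U_x)=\{\alpha:\min\mathrm{supp}(\alpha)\le x\}$ is open, so $f_X$ is continuous.

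Next I would reduce the weak equivalence to a local computation over the cover $\mathcal{U}=\{U_x\}_{x\in X}$. This cover is basis-like: if $z\in U_a\cap U_b$ then $z\le a$ and $z\le b$, so $U_z\subseteq U_a\cap U_b$ with $z\in U_z$. The main point is then that each restriction $f_X\colon f_X^{-1}(U_x)\to U_x$ is a weak homotopy equivalence, which I would obtain by showing both spaces are contractible and nonempty. On the one hand $U_x$ has $x$ as a maximum, so the identity and the constant map at $x$ are comparable order-preserving maps and therefore homotopic, giving contractibility. On the other hand, a simplex $\sigma$ satisfies $\min\mathrm{supp}(\sigma)\le x$ exactly when $\sigma$ has a vertex in $U_x$, so $f_X^{-1}(U_x)$ is precisely the open star of the subcomplex $\mathcal{K}(U_x)$ in $\mathcal{K}(X)$; the standard linear deformation retraction of an open star onto its subcomplex then shows $f_X^{-1}(U_x)\simeq|\mathcal{K}(U_x)|$, and the latter is contractible because $\mathcal{K}(U_x)$ is a cone with apex $x$. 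Both spaces contain the vertex $x$, so they are nonempty, and a map between weakly contractible spaces is a weak homotopy equivalence.

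With these local weak equivalences in hand, I would invoke McCord's theorem on basis-like covers: if $f\colon X\to Y$ is continuous and $\mathcal{U}$ is a basis-like open cover of $Y$ such that $f|\colon f^{-1}(U)\to U$ is a weak homotopy equivalence for every $U\in\mathcal{U}$, then $f$ is a weak homotopy equivalence. Applying it to $f_X$ and $\mathcal{U}=\{U_x\}_{x\in X}$ finishes the proof. I expect the genuine obstacle to be this last black box rather than the explicit constructions above: its proof is a local-to-global patching result, established by testing against maps of spheres and disks (which, being compact, meet only finitely many members of $\mathcal{U}$) and gluing the resulting weak equivalences by a Mayer--Vietoris/van Kampen induction over the cover, or equivalently by comparing both sides with the nerve of $\mathcal{U}$. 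I would cite this general statement as the key external input.
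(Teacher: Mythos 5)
Your proposal is correct and follows exactly the classical argument: the paper itself does not prove this statement (it is recalled as background in Section 2 and cited to McCord), and your construction --- the map $\alpha\mapsto\min\mathrm{supp}(\alpha)$, continuity via preimages of the minimal open sets, local weak contractibility of both $U_x$ (it has a maximum) and $f_X^{-1}(U_x)$ (open star of the full cone subcomplex $\mathcal{K}(U_x)$), and the final appeal to McCord's local-to-global theorem for basis-like open covers --- is precisely the proof in the cited source. The only genuinely nontrivial input is, as you say, that last black box, and citing it is the standard and appropriate way to close the argument.
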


\begin{df} Let $L$ be a finite simplicial complex. The face poset $\mathcal{X}(L)$ is the poset of simplices of $L$ ordered by inclusion .
\end{df}

\begin{thm}[\cite{mccord1966singular}] Let $L$ be a finite simplicial complex. Then there exists a weak homotopy equivalence $f_\mathcal{X}(L):|L|\rightarrow \mathcal{X}(L)$.
\end{thm}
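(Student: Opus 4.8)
The plan is to deduce this statement from the preceding theorem (the weak homotopy equivalence $f_X:|\mathcal{K}(X)|\to X$, valid for every finite $T_0$-space $X$) rather than constructing a map out of $|L|$ from scratch. The observation that makes this work is that the order complex of the face poset of $L$ is exactly the barycentric subdivision of $L$; thus, after a harmless refinement of the triangulation, the $\mathcal{K}$-version of the theorem already supplies the map we need. Concretely, I would apply the previous theorem to the finite $T_0$-space $X=\mathcal{X}(L)$ and then transport the resulting map along the identification of polyhedra.

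The argument then splits into two steps. First I would record the combinatorial identity $\mathcal{K}(\mathcal{X}(L))=L'$, where $L'$ denotes the barycentric subdivision of $L$. By definition a simplex of $\mathcal{K}(\mathcal{X}(L))$ is a nonempty chain of the poset $\mathcal{X}(L)$, that is, a family of simplices of $L$ totally ordered by inclusion $\sigma_0\subsetneq\sigma_1\subsetneq\cdots\subsetneq\sigma_k$; these are precisely the simplices (flags of faces) of $L'$, whose vertex set is the set of simplices of $L$. Hence the two abstract simplicial complexes coincide. Second I would invoke the classical fact that barycentric subdivision does not change the polyhedron: there is a homeomorphism $h:|L'|\to|L|$ sending each vertex $\widehat{\sigma}$ of $L'$ to the barycenter of $\sigma$ inside $|L|$ and extending affinely on each simplex. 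Writing $\mu=f_{\mathcal{X}(L)}:\bigl|\mathcal{K}(\mathcal{X}(L))\bigr|\to\mathcal{X}(L)$ for the map furnished by the preceding theorem applied to $X=\mathcal{X}(L)$, I would define the map in the statement to be the composite
\[
|L|\xrightarrow{\,h^{-1}\,}|L'|=\bigl|\mathcal{K}(\mathcal{X}(L))\bigr|\xrightarrow{\;\mu\;}\mathcal{X}(L).
\]
Since $h^{-1}$ is a homeomorphism and $\mu$ is a weak homotopy equivalence by the preceding theorem, the composite is a weak homotopy equivalence, which finishes the proof.

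The main obstacle is entirely contained in the second step: verifying that $h$ is a genuine homeomorphism $|L'|\cong|L|$ and that the identification $\mathcal{K}(\mathcal{X}(L))=L'$ is compatible with the two geometric realizations, not merely a bijection on vertices. This is where the affine and barycentric-coordinate bookkeeping lives, and it is the only place real work is required, since the homotopical content has been outsourced to the already-established $\mathcal{K}$-theorem.

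Finally, I would include one motivating remark explaining why the detour through the subdivision is necessary under the conventions of this paper. One cannot simply use the naive carrier map $x\mapsto\operatorname{supp}(x)$ from $|L|$ to $\mathcal{X}(L)$ with the lower-set topology: the preimage of a minimal open set $U_\sigma=\{\tau:\tau\subseteq\sigma\}$ is the closed simplex $\overline{\sigma}$, which is not open, so this map is not continuous. Passing through $L'$ is precisely what repairs this. If instead a self-contained proof were desired, one would apply McCord's theorem on basis-like open covers to a carrier-type map (using the open-star cover, which amounts to taking the opposite-order topology on the face poset), and there the delicate point would be the contractibility of the preimages of the sets $U_\sigma$; I would expect that contractibility verification, rather than the covering combinatorics, to be the crux of such an alternative route.
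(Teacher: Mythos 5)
Your proposal is correct and coincides with the standard proof of this result: the paper itself gives no argument (the theorem is quoted from \cite{mccord1966singular}), and the proof found there and in \cite{barmak2011algebraic} is precisely your reduction, namely applying the preceding theorem to the finite $T_0$-space $X=\mathcal{X}(L)$ together with the identification $\mathcal{K}(\mathcal{X}(L))=L'$ and the homeomorphism $|L'|\cong|L|$. Nothing is missing; your closing remark about the discontinuity of the naive support map under the lower-set topology correctly explains why the detour through the barycentric subdivision is needed.
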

There is a notion of simple homotopy--indeed, the classical one--within the context of simplicial complexes, introduced by J.H.C. Whitehead \cite{whitehead1939simplicial,whitehead1941onincidence,whitehead1950simple} (see \cite{cohen1970course} for a comprehensive introduction to this topic and its historical development). The idea is to identify certain moves—elementary collapses and expansions—on finite simplicial complexes that preserve the homotopy type. J.A. Barmak and E.G. Minian, in \cite{barmak2008simple}, showed that this notion is equivalent to the notion of simple homotopy defined before for finite posets via the functor $\mathcal{K}$. In this paper, we choose to work within this combinatorial framework, omitting the classical definition and relying on its equivalence with the poset-based approach.

Finally, we introduce some matrix notation:
\begin{itemize}
    \item $1_{n \times m}$ denotes the $n \times m$ matrix of ones,
    \item $0_{n \times m}$ denotes the $n \times m$ matrix of zeros,
    \item $I_n$ denotes the $n \times n$ identity matrix,
    \item $e_i$ denotes the vector with a 1 in the $i$-th position and zeros elsewhere.
\end{itemize}
When there is no risk of confusion, we will omit the subscripts of the matrices

\section{Finite Spaces, Matrices and Graphs}\label{sec:finite_matrix_graphs}

We define a class of matrices associated with finite $T_0$-spaces ad state some basic properties. Additionally, we interpret these matrices as adjacency matrices of certain directed graphs.

\begin{df}\label{def:matrices_zeros} Let $\mathcal{M}'$ be the set of all square matrices $(a_{i,j})$ with entries in $\{ 0,1\}$ such that:
\begin{enumerate}
\item $a_{i,i}=0,$
\item if $a_{i,j}=0$, then $a_{j,i}=1$,
\item if $a_{i,j}=a_{j,k}=0$, then $a_{i,k}=0.$ 
\end{enumerate}
\end{df}

Let $X,Y\in \mathcal{M}'$. Two matrices $X$ and  $Y$ are said to be equivalent if and only if there exists a permutation matrix $E$ such that $EAE^{-1}=B$. Let $\mathcal{M}$ denote the set of equivalence classes of elements in $\mathcal{M}'$.

Given a finite $T_0$-space $X=\{ x_i\}$, define the matrix $X_M=(x_{i,j})$ by:
\[
x_{i,j} = 
\begin{cases}
0 & \text{if } x_i \leq x_j, \\\\
1 & \text{otherwise}.
\end{cases}
\]

It is easy to verify that  $X_M\in \mathcal{M}'$. Notice that a different labelling of the elements of $X$ produces the same element in $\mathcal{M}$. From now on, we denote to the $i$-th column of $X_M$ by $c_i$, while the $i$-th row of $X_M$ by $r_i$. We will also denote $X_M$ as $X$ when it is clear from the context.

\begin{thm}\label{thm:matrices_grafos} The homeomorphism classes of finite $T_0$-spaces are in one-to-one correspondence with $\mathcal{M}$.
\end{thm}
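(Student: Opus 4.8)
The plan is to exhibit the correspondence explicitly through the assignment $X \mapsto [X_M]$, where $[X_M]$ denotes the class of $X_M$ in $\mathcal{M}$, and to prove it is a well-defined bijection by translating the three defining conditions of $\mathcal{M}'$ into the reflexivity, antisymmetry, and transitivity of a partial order. First I would confirm that $X_M \in \mathcal{M}'$ for every finite $T_0$-space $X$, as already asserted after Definition~\ref{def:matrices_zeros}. Writing $x_{i,j}=0 \iff x_i \leq x_j$, condition (1) records reflexivity ($x_i \leq x_i$), condition (3) records transitivity, and condition (2) records antisymmetry: for $i \neq j$ the relations $x_i \leq x_j$ and $x_j \leq x_i$ would force $x_i = x_j$, so $x_i \leq x_j$ implies $x_j \not\leq x_i$, that is $x_{j,i}=1$.

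Next I would establish well-definedness and injectivity together, using the category isomorphism recalled in Section~\ref{sec:preliminaries}, by which homeomorphisms of finite $T_0$-spaces are exactly order isomorphisms of the associated posets. A relabelling of $X$, or equivalently an order isomorphism $X \cong Y$, is a bijection $\sigma$ of the index sets; replacing each row and column index $i$ by $\sigma(i)$ acts on $X_M$ as a simultaneous permutation of rows and columns, i.e. as conjugation $E X_M E^{-1}$ by the corresponding permutation matrix $E$. Hence $[X_M]$ depends only on the homeomorphism class of $X$, and the assignment descends to a map $\Phi$ from homeomorphism classes to $\mathcal{M}$. Conversely, if $Y_M = E X_M E^{-1}$ with $E$ inducing $\sigma$, then $y_{\sigma(i),\sigma(j)} = x_{i,j}$, so $x_i \leq x_j \iff y_{\sigma(i)} \leq y_{\sigma(j)}$, meaning $\sigma$ is an order isomorphism and $X$, $Y$ are homeomorphic; thus $\Phi$ is injective.

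For surjectivity I would begin from an arbitrary $A=(a_{i,j}) \in \mathcal{M}'$ on the index set $\{1,\dots,n\}$ and define the relation $i \preceq j \iff a_{i,j}=0$. Conditions (1), (2), (3) yield precisely reflexivity, antisymmetry, and transitivity, so $(\{1,\dots,n\},\preceq)$ is a finite poset $X_A$; by construction $(X_A)_M = A$, whence $\Phi([X_A]) = [A]$ and every class in $\mathcal{M}$ is attained.

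The main obstacle is largely bookkeeping rather than depth. The delicate point is verifying that conjugation by permutation matrices on the matrix side matches relabelling and order isomorphism on the space side exactly, so that the three-fold translation produces a genuine bijection and not merely a surjection onto $\mathcal{M}$. Once the diagonal versus off-diagonal distinction in condition (2) is handled carefully, antisymmetry follows cleanly and the dictionary between the two sides closes.
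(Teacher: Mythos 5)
Your proposal is correct and follows essentially the same route as the paper: the map $X\mapsto [X_M]$, well-definedness and injectivity via permutation matrices realizing order isomorphisms (equivalently, homeomorphisms), and surjectivity by reading conditions (1)--(3) of Definition~\ref{def:matrices_zeros} as reflexivity, antisymmetry, and transitivity of the relation $x_i\leq x_j \iff m_{i,j}=0$. Your explicit handling of the diagonal case in condition (2) is a welcome extra bit of care, but it does not change the argument's structure.
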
 
\begin{proof}

Suppose $g:X\rightarrow Y$ is a homeomorphism. If $x_i\in U_{x_j}$ ($x_i\in X\setminus U_{x_j}$), then $g(x_i)\in U_{g(x_j)}$ ($g(x_i)\in Y\setminus U_{g(x_j)}$). This shows that $X$ and $Y$ determine the same element in $\mathcal{M}$. So we have a well-defined map $T:\mathcal{F}\rightarrow \mathcal{M}$ where $\mathcal{F}$ denotes the set of finite $T_0$-spaces.

Suppose that $X=\{x_j\}$ and $Y=\{y_i\}$ are finite $T_0$-spaces such that they belong to the same class in $\mathcal{M}$. Let $E$ denote the permutation matrix satisfying $EX_ME^{-1}=Y_M$ and $\tau$ the permutation related to $E$. Then $g:X\rightarrow Y$ defined by $g(x_j)=y_{\tau(j)}$ is a homeomorphism. This gives that $T$ is injective. 

Given a matrix $M=(m_{i,j})\in \mathcal{M}$, consider $X=\{x_i\}_{i=1,...,n}$ where $n$ is the order of $M$. Set $$x_i\leq x_j\iff m_{i,j}=0.$$
Clearly, this relation is reflexive by 1, antisymmetric by 2 and transitive by 3, which shows that $X$ with this relation is a poset. It is immediate that $T(X)=M$.
  
\end{proof}
\begin{rem} By removing condition 2 in Definition \ref{def:matrices_zeros}, one may obtain a similar result for finite topological spaces that do not satisfy the $T_0$ property, that is, preordered sets. 
\end{rem}

We now provide a simple criterion to check whether a matrix with entries 0 or 1 defines a finite poset.

\begin{thm} Let $X$ be a finite set of $n$ points. A matrix $M=(m_{i,j})$ of dimensions $n\times n$ such that $m_{i,i}=0$ for every $i$ and $m_{i,j}\in \{ 0,1\}$ corresponds to a topology on $X$ if and only if $(1_{n\times n}-M)(1_{n\times n}-M)=(1_{n\times n}-M)$, where the matrix multiplication here uses Boolean arithmetic.
\end{thm}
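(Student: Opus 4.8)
The plan is to translate the Boolean matrix condition into the combinatorial axioms of Definition~\ref{def:matrices_zeros}, interpreting $N := 1_{n\times n}-M$ as the ``complement'' matrix whose $(i,j)$ entry is $1$ exactly when $m_{i,j}=0$. Under Boolean arithmetic, the entry $(N^2)_{i,k}$ equals $1$ if and only if there exists some $j$ with $N_{i,j}=N_{j,k}=1$, i.e.\ with $m_{i,j}=m_{j,k}=0$. So the identity $N^2=N$ should encode precisely reflexivity and transitivity of the relation $x_i\leq x_j \iff m_{i,j}=0$, while the diagonal hypothesis $m_{i,i}=0$ gives reflexivity directly. First I would set up this dictionary carefully and observe that, because $M$ has $0$--$1$ entries with $m_{i,i}=0$, the matrix $N$ is a $0$--$1$ matrix with $1$'s on the diagonal, i.e.\ $N$ is exactly the ``$\leq$-incidence'' matrix.

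For the forward direction, I would assume $M$ defines a topology (equivalently a preorder, since the $T_0$ condition is \emph{not} imposed here --- note the theorem asks for a topology on $X$, allowing non-$T_0$ spaces, which matches the earlier remark about dropping condition~2). From transitivity and reflexivity of $\leq$ I would show $N^2=N$ entrywise: if $(N^2)_{i,k}=1$ then there is an intermediate $j$ with $x_i\leq x_j\leq x_k$, so by transitivity $x_i\leq x_k$ and $N_{i,k}=1$; conversely if $N_{i,k}=1$ then taking $j=k$ (using $N_{k,k}=1$ from reflexivity) witnesses $(N^2)_{i,k}=1$. For the reverse direction, I would assume $N^2=N$ and the diagonal condition, then verify the three axioms needed to get a preorder: reflexivity is $m_{i,i}=0$; transitivity follows because $m_{i,j}=m_{j,k}=0$ means $N_{i,j}=N_{j,k}=1$, hence $(N^2)_{i,k}=1$, hence by the hypothesis $N_{i,k}=1$, i.e.\ $m_{i,k}=0$. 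A preorder on a finite set is exactly an Alexandroff topology via its lower sets (as recalled in Section~\ref{sec:preliminaries}), so this is equivalent to $M$ corresponding to a topology.

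The main subtlety I anticipate is \emph{bookkeeping between Boolean and ordinary arithmetic}, together with making sure the statement is interpreted at the level of general topologies rather than $T_0$-spaces. Under Boolean arithmetic the product saturates at $1$, so the equation $N^2=N$ says nothing about multiplicities; this is what makes it capture transitivity cleanly without extra conditions. I would emphasize that the diagonal hypothesis $m_{i,i}=0$ is doing real work: it forces $N_{i,i}=1$, which both supplies reflexivity and guarantees that $N^2\geq N$ automatically, so that the content of $N^2=N$ reduces to the inclusion $N^2\leq N$, which is exactly transitivity. Once this is observed, the two implications are short, and the only care required is to state explicitly that we invoke the correspondence between preorders (reflexive--transitive relations) and Alexandroff topologies on a finite set, rather than the poset--topology correspondence, since antisymmetry (condition~2) is deliberately absent here.
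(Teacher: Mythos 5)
Your proof is correct, but it takes a genuinely different route from the paper's. The paper's entire proof is a one-line reduction to the literature: it observes that $1_{n\times n}-M$ is the standard incidence matrix of the space in the sense of \cite{quasis1966sharp}, and then cites Theorem 4 of that paper, which is precisely the statement that Boolean idempotency of the incidence matrix characterizes topologies on a finite set. You instead prove the equivalence from scratch: the dictionary $N=1_{n\times n}-M$ with $N_{i,j}=1\iff m_{i,j}=0\iff x_i\leq x_j$, the observation that the diagonal hypothesis forces $N^2\geq N$ entrywise (take $j=k$ as the intermediate index), so that $N^2=N$ reduces to the inclusion $N^2\leq N$, which is exactly transitivity, and finally the Alexandroff correspondence between preorders and topologies on a finite set. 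Your handling of the $T_0$ issue is also the right one, and arguably more careful than the paper's, whose proof speaks of ``$T_0$-spaces'' even though the theorem concerns arbitrary topologies: antisymmetry --- condition 2 of Definition \ref{def:matrices_zeros} --- plays no role here, and the relevant correspondence is the one between reflexive--transitive relations (preorders) and finite topological spaces via lower sets and the specialization preorder, not the poset--$T_0$ correspondence recalled in Section \ref{sec:preliminaries}. What the paper's approach buys is brevity and an explicit link to Sharp's work on incidence matrices; what yours buys is a self-contained argument that exposes how elementary the content is, namely that Boolean idempotency encodes reflexivity plus transitivity and nothing more.
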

\begin{proof}
For any given finite $T_0$-space $X$, $1_{|X|\times |X|}-X_M$ gives the standard incidence matrix of $X$ considered in \cite{quasis1966sharp}. Therefore, the result follows from \cite[Theorem 4]{quasis1966sharp}.
\end{proof}

The following result is an immediate consequence of the construction of the matrix $X_M$ for any finite $T_0$-space $X$.
\begin{prop}\label{prop:orden_opuesto_matriz} Let $X$ be a finite $T_0$-space. Then the matrix of $X$ with the opposite order is $X_M^\intercal$.
\end{prop}

We now examine some topological and combinatorial properties of $X$ and $X_M$, omitting trivial proofs.

\begin{prop}\label{prop:sumas_elementos_filas_columnas} Let $X=\{x_i\}$ be a finite $T_0$-space with $|X|=n$ and $X_M =(x_{i,j})$. Then  \begin{itemize}
\item $\sum_{k=1}^n x_{i,k}=n-|F_{x_i}|$.
\item $\sum_{k=1}^n x_{k,i}=n-|U_{x_i}|$.
\item $\sum_{k=1}^n x_{k,k}=0$.
\item $\sum_{k=1}^n x_{i,k}=0$ implies that $x_i$ is a minimum.
\item $\sum_{k=1}^n x_{k,i}=0$ implies that $x_i$ is a maximum.
\item $\sum_{k=1}^n x_{i,k}=n-1$ implies that $x_i$ is a maximal point.
\item $\sum_{k=1}^n x_{k,i}=n-1$ implies that $x_i$ is a minimal point.
\item $x_{i_1}<...<x_{i_k}$ in $X$ if and only if $\sum_{j=1}^{k-1} x_{i_j, i_{j+1}}=0$.
\end{itemize}
\end{prop}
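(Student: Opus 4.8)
The plan is to verify each of the eight bulleted claims directly from the definition $x_{i,j}=0 \iff x_i \leq x_j$, since each reduces to a simple count or logical equivalence once the entries are interpreted order-theoretically. The unifying observation is that for a fixed row index $i$, the entry $x_{i,k}$ records whether $x_i \not\leq x_k$, so the zeros in row $i$ index exactly those $k$ with $x_i \leq x_k$; dually, the zeros in column $i$ index the $k$ with $x_k \leq x_i$. I would first set up this dictionary explicitly, because every subsequent item is an immediate translation of it.

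For the first two items, the sum $\sum_{k=1}^n x_{i,k}$ counts the indices $k$ with $x_i \not\leq x_k$, i.e. $n$ minus the number of $k$ with $x_i \leq x_k$. Since $x_i \leq x_k$ means $x_i \in U_{x_k}$, equivalently $x_k \in F_{x_i}$, the set of such $k$ is exactly $F_{x_i}$, giving $\sum_k x_{i,k}=n-|F_{x_i}|$; the column sum is the symmetric statement using $U_{x_i}$ and Proposition \ref{prop:orden_opuesto_matriz}. The third item, $\sum_k x_{k,k}=0$, is immediate from condition 1 in Definition \ref{def:matrices_zeros} (equivalently, reflexivity $x_i \leq x_i$). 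For the extremal-element items I would read off the consequences of a vanishing or maximal row/column sum: $\sum_k x_{i,k}=0$ forces $x_i \leq x_k$ for all $k$, i.e. $x_i$ is a minimum; $\sum_k x_{i,k}=n-1$ forces $x_i \leq x_k$ only when $k=i$, so no point strictly above $x_i$ exists, making $x_i$ maximal. The column versions follow by the opposite-order symmetry.

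For the final item, a chain $x_{i_1}<\cdots<x_{i_k}$ gives $x_{i_j}\leq x_{i_{j+1}}$ for each $j$, hence $x_{i_j,i_{j+1}}=0$ and the sum vanishes; conversely, if $\sum_{j=1}^{k-1}x_{i_j,i_{j+1}}=0$ then each summand is $0$, so $x_{i_j}\leq x_{i_{j+1}}$, and transitivity (condition 3) together with the fact that these are distinct labelled points yields the strict chain. I expect no serious obstacle here, as these are all routine unwindings; the only point requiring mild care is keeping the direction of the inequality consistent between rows and columns, and ensuring the ``only implies'' items are stated as one-directional (a maximal point need not have row sum $n-1$ if it lies above other points), which I would flag rather than attempt to strengthen. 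Given the paper's stated intention to omit trivial proofs, I would present the row/column dictionary once and then dispatch the eight items in a few lines each.
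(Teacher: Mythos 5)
Your proof is correct and is precisely the routine verification the paper intends, since the paper omits the proof of this proposition as trivial. One correction to your parenthetical aside, though it does not affect the validity of your argument: the converses of the implication items do in fact hold --- a maximal point $x_i$ satisfies $F_{x_i}=\{x_i\}$ and therefore always has row sum exactly $n-1$ (lying above other points places zeros in the \emph{column} of $x_i$, not in its row), so these items are really equivalences that the paper merely chose to state in one direction.
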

\begin{rem} Suppose that $X=\{x_i\}$ is a finite  $T_0$-space. If $X$ has a minimum or maximum, then $\det (X_M)=0$. 
\end{rem}
\begin{prop} Let $X$ be a finite $T_0$-space. Then $X$ is not connected if and only if $X_M$ is equivalent to $$\begin{pmatrix}
Y_M & 1_{n\times m} \\
1_{m\times n} & Z_M
\end{pmatrix}, $$
where $Y_M,Z_M\in \mathcal{M}'$. 
\end{prop}

\begin{prop} Let $X$ be a finite $T_0$-space. Then the width of $X$ is $l$ if and only if $X_M$ is equivalent to a matrix $(x_{i,j})$ such that $x_{i,j}=1$ for every $i\neq j$ and $i,j\leq l$, and there is no $k>l$ satisfying that $x_{s,k}=x_{k,s}=1$ for every $s\leq l$.
\end{prop}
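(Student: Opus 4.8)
The plan is to translate antichains into principal submatrices and read the width off from them. Recall that in $X_M$ we have $a_{i,j}=1$ exactly when $x_i\not\leq x_j$; hence two points $x_i,x_j$ are incomparable precisely when $a_{i,j}=a_{j,i}=1$. Consequently a subset $\{x_{i_1},\dots,x_{i_r}\}$ is an antichain if and only if the principal submatrix indexed by $i_1,\dots,i_r$ has all of its off-diagonal entries equal to $1$. Because equivalence in $\mathcal{M}$ is conjugation by a permutation matrix (that is, a simultaneous permutation of rows and columns), any chosen antichain can be moved to occupy the indices $1,\dots,r$; this is the single tool that lets us pass between the order-theoretic side and the block structure of the matrix.

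For the forward implication I would start from a maximum antichain $A$ of size $l$, which exists since $width(X)=l$. Applying the permutation that sends $A$ to $\{x_1,\dots,x_l\}$, the dictionary above gives $a_{i,j}=1$ for all $i\neq j$ with $i,j\leq l$. If there were an index $k>l$ with $a_{s,k}=a_{k,s}=1$ for every $s\leq l$, then $x_k$ would be incomparable to every element of $A$, so $A\cup\{x_k\}$ would be an antichain of size $l+1$, contradicting the maximality of $A$. Hence no such $k$ exists and $X_M$ is equivalent to a matrix of the stated form.

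For the converse, the first $l$ points form an antichain by the dictionary, so $width(X)\geq l$, and the displayed condition on $k>l$ says this antichain admits no one-element enlargement, i.e. it is \emph{maximal}. The main obstacle is precisely here: maximality is weaker than maximum size, so the condition as written does not by itself bound the width from above (for instance, a point lying below two incomparable points gives a maximal antichain of size $1$ in a poset of width $2$). To secure $width(X)\leq l$ I would interpret the matrix condition as the assertion that \emph{no} permutation produces an $(l+1)\times(l+1)$ principal block with all off-diagonal entries equal to $1$; by the dictionary this is exactly the non-existence of an antichain of size $l+1$, which yields the reverse inequality and closes the proof. Matching this global maximality clause to the notion of width, rather than the purely local non-extendability of one fixed block, is the delicate point the argument must address.
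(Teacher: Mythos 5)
Your proposal is correct, and the dictionary it rests on (incomparability of $x_i,x_j$ equals $a_{i,j}=a_{j,i}=1$, so antichains correspond exactly to principal submatrices whose off-diagonal entries are all $1$) is precisely the argument the paper has in mind: this proposition sits in a list whose proofs the paper explicitly omits as trivial, so there is no written proof to compare against. Your forward direction is the intended one, and it is complete.

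The difficulty you isolate in the converse is real, and it is a flaw in the statement rather than a gap in your argument. Under the literal existential reading (``there exists a permutation-equivalent matrix satisfying both displayed conditions''), the proposition is false: for $X=\{a,b,c\}$ with $a<b$, $a<c$ and $b,c$ incomparable, the labelling $x_1=a$, $x_2=b$, $x_3=c$ gives $a_{1,2}=a_{1,3}=0$, so the condition holds with $l=1$ (the first clause is vacuous and no $k>1$ has $a_{1,k}=a_{k,1}=1$), even though the width of $X$ is $2$; moreover the condition also holds with $l=2$ (put $b,c$ in the first two positions), so the ``if'' direction cannot hold for both values of $l$. Your repair --- reading the second clause as ranging over all permutations, i.e.\ as the non-existence of any $(l+1)\times(l+1)$ principal block with all off-diagonal entries equal to $1$, which by the dictionary is the non-existence of an antichain of size $l+1$ --- is the correct one, and with it your two inequalities (width at least $l$ from the first clause, at most $l$ from the repaired second clause) close the proof. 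In short: your proof is sound, you take the same route the paper intends, and you have additionally identified and patched an imprecision in the statement that the paper glosses over by omitting the proof.
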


\begin{prop} Let $X=\{x_i\}$ be a finite $T_0$-space. Then $x_i\prec x_j$ if and only if $r_i+c_j^\intercal =(p_1,\cdots,p_n)$ where $p_{i}=p_j=0$ and $p_{k}\neq 0$ for every $k\neq i,j$.
\end{prop}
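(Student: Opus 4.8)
The plan is to read off the $k$-th entry of $r_i + c_j^\intercal$ and reinterpret its vanishing as an order relation. Writing $X_M = (x_{i,j})$, the $i$-th row is $r_i = (x_{i,1},\dots,x_{i,n})$ and the transposed $j$-th column is $c_j^\intercal = (x_{1,j},\dots,x_{n,j})$, so the $k$-th coordinate of $r_i + c_j^\intercal$ is $p_k = x_{i,k} + x_{k,j}$. Since all entries lie in $\{0,1\}$, one has $p_k = 0$ exactly when $x_{i,k} = 0$ and $x_{k,j} = 0$; by the definition of $X_M$ this says precisely $x_i \leq x_k$ and $x_k \leq x_j$. Thus the central equivalence I would establish first is
\[
p_k = 0 \iff x_i \leq x_k \leq x_j ,
\]
and the whole proof is a translation of the stated vector condition through it.

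Next I would dispose of the two special coordinates. Setting $k=i$ and using reflexivity $x_i\leq x_i$ gives $p_i=0\iff x_i\leq x_j$, and symmetrically $p_j=0\iff x_i\leq x_j$; so requiring $p_i=p_j=0$ is nothing more than $x_i\leq x_j$. Throughout I take $i\neq j$, which is implicit in the notation $x_i\prec x_j$, and I use that distinct labels name distinct points, i.e. $x_k=x_i$ forces $k=i$. For the forward direction, from $x_i\prec x_j$ I get $x_i< x_j$, hence $p_i=p_j=0$; and if some $k\neq i,j$ had $p_k=0$ then $x_i\leq x_k\leq x_j$ with $x_k\neq x_i,x_j$ would yield $x_i<x_k<x_j$, contradicting the covering relation. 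For the converse, $p_i=0$ gives $x_i\leq x_j$, and $i\neq j$ upgrades this to $x_i<x_j$; any strictly intermediate $z=x_k$ would have $k\neq i,j$ and $p_k=0$, contradicting the hypothesis, so no such $z$ exists and $x_i\prec x_j$.

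There is no deep obstacle here; the only real care lies in the bookkeeping around the indices $i$ and $j$. The diagonal positions must be handled separately, where betweenness is automatic from reflexivity, and antisymmetry together with the distinctness of the labels is what converts ``$p_k\neq 0$ for all $k\neq i,j$'' into the genuine absence of an element strictly between $x_i$ and $x_j$. The point most worth flagging is the degenerate case $i=j$: then the stated condition holds vacuously (every off-diagonal $p_k$ is nonzero by condition 2 of Definition \ref{def:matrices_zeros}) while $x_i\prec x_i$ is false, so the hypothesis $i\neq j$ is genuinely needed and I would state it explicitly at the outset.
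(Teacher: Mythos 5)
Your proof is correct and takes essentially the same route as the paper's: compute the $k$-th entry $p_k=x_{i,k}+x_{k,j}$, translate its vanishing into $x_i\leq x_k\leq x_j$, and handle the coordinates $k=i,j$ via the zero diagonal. Your extra remark that $i\neq j$ must be assumed is a worthwhile caveat the paper leaves implicit: when $i=j$ the vector condition holds automatically (condition 2 of Definition \ref{def:matrices_zeros} forces $p_k\neq 0$ for $k\neq i$) even though $x_i\prec x_i$ is false.
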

\begin{proof}
Suppose $x_i\prec x_j$. Then $x_{i,j}=0$ and $p_j=x_{i,j}+x_{j,j}=x_{i,i}+x_{i,j}=p_i=0$. If $x_{i,k}+x_{k,j}=0$ for some $k\neq i,j$, then $x_i<x_k$ and $x_k<x_j$, which contradicts $x_i\prec x_j$.

If $r_i+c_j^\intercal=(p_1,\cdots,p_n)$ where $p_{i}=p_j=0$ and $p_{k}\neq 0$ for every $i\neq j$, then $x_{i,j}+x_{j,j}=x_{i,i}+x_{i,j}=0$ and $x_{i,k}+x_{k,j}\neq 0$ for every $k\neq i,j$. Hence, $x_i<x_j$ and there is no $k\neq i,j$ such that $x_i\leq x_k\leq x_j$.
\end{proof}

\begin{prop} Let $X=\{x_i\}$ be a finite $T_0$-space and $X_M^2=({m}_{i,j})$. Then $m_{i,j}=|\hat{F}_{x_j}\cap \hat{U}_{x_i}|+|\hat{U}_{x_i}\cap X\setminus C_{x_j}|+|\hat{F}_{x_j}\cap X\setminus C_{x_i}|+|X\setminus C_{x_i}\cup C_{x_j}|$ and $n-m_{i,j}=|F_{x_i}\cup U_{x_j}|$.
\end{prop}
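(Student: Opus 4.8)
The plan is to interpret the integer entries of $X_M^2$ combinatorially through the poset order and then split the resulting counting set. First I would write $m_{i,j}=\sum_{k=1}^{n} x_{i,k}x_{k,j}$ and observe that, since all entries lie in $\{0,1\}$, this sum simply counts the indices $k$ for which $x_{i,k}=x_{k,j}=1$. By the definition of $X_M$, $x_{i,k}=1$ means $x_i\not\leq x_k$, equivalently $x_k\notin F_{x_i}$, and $x_{k,j}=1$ means $x_k\not\leq x_j$, equivalently $x_k\notin U_{x_j}$. Hence
$$m_{i,j}=\bigl|\{x_k\in X: x_k\notin F_{x_i}\ \text{and}\ x_k\notin U_{x_j}\}\bigr|=\bigl|X\setminus(F_{x_i}\cup U_{x_j})\bigr|.$$
Taking complements in $X$ then yields the second identity at once, since the complement of $X\setminus(F_{x_i}\cup U_{x_j})$ is $F_{x_i}\cup U_{x_j}$, giving $n-m_{i,j}=|F_{x_i}\cup U_{x_j}|$ with no further work.

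For the first identity I would set $S=X\setminus(F_{x_i}\cup U_{x_j})$ and partition it according to the position of each $x_k\in S$ relative to $x_i$ and to $x_j$. The key observation is that membership in $S$ restricts these positions sharply: since $x_k\notin F_{x_i}$ we have $x_i\not\leq x_k$, so $x_k$ is either strictly below $x_i$ (that is, $x_k\in\hat{U}_{x_i}$) or incomparable to $x_i$ (that is, $x_k\in X\setminus C_{x_i}$), and these two options are mutually exclusive because $\hat{U}_{x_i}\subseteq C_{x_i}$. Symmetrically, $x_k\notin U_{x_j}$ forces $x_k$ to be either strictly above $x_j$ (so $x_k\in\hat{F}_{x_j}$) or incomparable to $x_j$ (so $x_k\in X\setminus C_{x_j}$). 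Crossing these two dichotomies produces a $2\times 2$ partition of $S$ into the four sets $\hat{F}_{x_j}\cap\hat{U}_{x_i}$, $\hat{U}_{x_i}\cap(X\setminus C_{x_j})$, $\hat{F}_{x_j}\cap(X\setminus C_{x_i})$ and $X\setminus(C_{x_i}\cup C_{x_j})$; adding their cardinalities gives the first formula.

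The routine part is verifying that these four sets are pairwise disjoint and that their union is exactly $S$. I expect the main (and only minor) obstacle to be bookkeeping of the apparent index swap: the counting set $S$ is naturally phrased through $F_{x_i}$ and $U_{x_j}$, whereas the four summands are phrased through $\hat{F}_{x_j}$, $\hat{U}_{x_i}$ and the comparability cones $C_{x_i},C_{x_j}$. One must therefore confirm at each of the four cases that the defining constraints $x_i\not\leq x_k$ and $x_k\not\leq x_j$ are preserved and that no point is counted twice. Once one records that a point of $S$ comparable to $x_i$ must in fact satisfy $x_k<x_i$, and a point of $S$ comparable to $x_j$ must satisfy $x_k>x_j$, the verification becomes immediate and the two displayed identities follow.
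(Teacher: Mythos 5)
Your proposal is correct and follows essentially the same route as the paper: both interpret $m_{i,j}$ as the number of indices $k$ with $x_{i,k}=x_{k,j}=1$ and then split according to whether $x_k$ lies strictly below $x_i$ or is incomparable to it, and strictly above $x_j$ or incomparable to it, yielding the same four-set partition. The only difference is order of presentation: you obtain $n-m_{i,j}=|F_{x_i}\cup U_{x_j}|$ directly by complementation and then refine that set into the four terms, whereas the paper establishes the four-term formula first and remarks that the second identity follows from it.
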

\begin{proof}
We prove the first property, the second property may be deduced from the first one. Define $K_{i,j}=\{1\leq k\leq n|x_{i,k}=x_{k,j}=1   \}$, so $m_{i,j}=|K_{i,j}|$. Let us consider $k\in K_{i,j}$. Since $x_{i,k}=1$ we get $x_i>x_k$ or $x_i\notin C_{x_k}$. Similarly, $x_{k,j}=1$ implies that $x_k>x_j$ or $x_k\notin C_{x_j}$. Hence, 
$$|K_{i,j}|= |\hat{F}_{x_j}\cap \hat{U}_{x_i}|+|\hat{U}_{x_i}\cap X\setminus C_{x_j}|+|\hat{F}_{x_j}\cap X\setminus C_{x_i}|+|X\setminus C_{x_i}\cup C_{x_j}|.$$
\end{proof}
We can argue analogously to achieve a similar result using the transpose.
\begin{prop} Let $X=\{x_i\}$ be a finite $T_0$-space. 
\begin{itemize}
\item  If  $X_MX_M^\intercal=(m_{i,j})$. Then
$m_{i,j}=|\hat{U}_{x_i}\cap \hat{U}_{x_j}|+|\hat{U}_{x_i}\cap X\setminus C_{x_j}|+ |\hat{U}_{x_j}\cap X\setminus C_{x_i}|+|X\setminus C_{x_i}\cup C_{x_j}|$
and $n-m_{i,j}=|F_{x_i}\cup F_{x_j}|. $
\item If $X_M^\intercal X_M=(m_{i,j})$. Then 
$m_{i,j}=|\hat{F}_{x_i}\cap \hat{F}_{x_j}|+|\hat{F}_{x_i}\cap X\setminus C_{x_j}|+ |\hat{F}_{x_j}\cap X\setminus C_{x_i}|+|X\setminus C_{x_i}\cup C_{x_j}|$
and $n-m_{i,j}=|U_{x_i}\cup U_{x_j}|. $
\end{itemize}
\end{prop}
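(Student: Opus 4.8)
The plan is to treat the first bullet by a direct entrywise computation of the product and then to obtain the second bullet with no extra work by passing to the opposite order via Proposition \ref{prop:orden_opuesto_matriz}, exactly in the spirit of the preceding proposition about $X_M^2$.

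For the first bullet I would start from the entrywise formula
$$(X_MX_M^\intercal)_{i,j}=\sum_{k=1}^n x_{i,k}\,x_{j,k},$$
so that $m_{i,j}$ counts the indices $k$ for which $x_{i,k}=x_{j,k}=1$. The key translation step is to record, for a fixed index $s$, exactly which $x_k$ satisfy $x_{s,k}=1$. By definition $x_{s,k}=1$ means $x_s\not\le x_k$, and a short case analysis on the relation between $x_s$ and $x_k$ shows this happens precisely when $x_k\in\hat{U}_{x_s}$ (the case $x_k<x_s$) or $x_k\in X\setminus C_{x_s}$ (the case $x_k$ incomparable to $x_s$); these two sets are disjoint since $\hat{U}_{x_s}\subseteq C_{x_s}$.

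Applying this to both $s=i$ and $s=j$, the index set counted by $m_{i,j}$ is
$$\bigl(\hat{U}_{x_i}\cup(X\setminus C_{x_i})\bigr)\cap\bigl(\hat{U}_{x_j}\cup(X\setminus C_{x_j})\bigr),$$
and distributing the intersection yields the four pieces $\hat{U}_{x_i}\cap\hat{U}_{x_j}$, $\hat{U}_{x_i}\cap(X\setminus C_{x_j})$, $\hat{U}_{x_j}\cap(X\setminus C_{x_i})$, and $(X\setminus C_{x_i})\cap(X\setminus C_{x_j})=X\setminus(C_{x_i}\cup C_{x_j})$. The main point to verify, and the only place where care is needed, is that these four pieces are pairwise disjoint; this follows from the two disjointness relations $\hat{U}_{x_i}\cap(X\setminus C_{x_i})=\emptyset$ and $\hat{U}_{x_j}\cap(X\setminus C_{x_j})=\emptyset$ noted above. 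Summing their cardinalities gives the stated formula for $m_{i,j}$. For the complement identity I would instead count the indices $k$ with $x_{i,k}=0$ or $x_{j,k}=0$; since $x_{s,k}=0$ is equivalent to $x_s\le x_k$, that is $x_k\in F_{x_s}$, this count equals $|F_{x_i}\cup F_{x_j}|$, giving $n-m_{i,j}=|F_{x_i}\cup F_{x_j}|$.

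For the second bullet I would avoid repeating the argument and instead let $Y$ denote $X$ endowed with the opposite order, so that $Y_M=X_M^\intercal$ by Proposition \ref{prop:orden_opuesto_matriz} and hence $X_M^\intercal X_M=Y_MY_M^\intercal$. Applying the first bullet to $Y$ and using that reversing the order interchanges $\hat{U}$ with $\hat{F}$ and $F$ with $U$, while leaving each comparability set $C_x$ unchanged, the formulas for $Y$ translate directly into the claimed expressions for $m_{i,j}$ and for $n-m_{i,j}=|U_{x_i}\cup U_{x_j}|$. I do not expect any genuine obstacle here; the only real subtlety is the disjointness bookkeeping in the first bullet, which becomes routine once the membership translation of $x_{s,k}=1$ is fixed.
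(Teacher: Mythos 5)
Your proof is correct and follows essentially the same route as the paper: the paper omits the argument, indicating only that one should ``argue analogously'' to the preceding proposition on $X_M^2$, and your first bullet is exactly that direct count of indices $k$ with $x_{i,k}=x_{j,k}=1$ via the translation $x_{s,k}=1\iff x_k\in\hat{U}_{x_s}\sqcup(X\setminus C_{x_s})$. Your handling of the second bullet through the opposite order and Proposition \ref{prop:orden_opuesto_matriz} is a clean, valid shortcut in the same spirit as the duality arguments the paper uses elsewhere (e.g., in Theorem \ref{thm_beat_points}), rather than a genuinely different method.
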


Every matrix of $\mathcal{M}'$ can also be interpreted as the adjacency matrix of a digraph without loops, which we denote by $G_X$. Furthermore, we can derive a version of Theorem \ref{thm:matrices_grafos} for digraphs.  Let $\mathcal{G}$ be the set of all digraphs $G=(V,E)$ up to isomorphism such that:
\begin{enumerate}
\item There are no loops.
\item If $\{x,y\}\notin E$, then $\{y,x\}\in E$ for every $x,y\in V$.
\item If $\{x,y\},\{y,z \}\notin E$, then $\{x,z\}\notin E$  for every $x,y,z\in V$. 
\end{enumerate}

\begin{thm} The set of homeomorphism classes of finite $T_0$-spaces are in one-to-one correspondence with $\mathcal{G}$.
\end{thm}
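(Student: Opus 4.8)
The plan is to establish the correspondence by exhibiting a bijection between $\mathcal{G}$ and $\mathcal{M}$, and then appealing to Theorem \ref{thm:matrices_grafos}. The key observation is that conditions 1--3 defining $\mathcal{G}$ are precisely the digraph-theoretic translations of conditions 1--3 in Definition \ref{def:matrices_zeros}: a loop at $x$ corresponds to a nonzero diagonal entry $a_{x,x}$, the edge-completeness condition 2 encodes exactly the rule ``$a_{i,j}=0 \Rightarrow a_{j,i}=1$'', and the transitivity-style condition 3 on missing edges matches ``$a_{i,j}=a_{j,k}=0 \Rightarrow a_{i,k}=0$''. Here one must be careful with the encoding convention: since a matrix in $\mathcal{M}'$ has $a_{i,j}=0$ meaning $x_i\le x_j$, the digraph $G_X$ places an edge $\{x_i,x_j\}\in E$ exactly when $a_{i,j}=1$, i.e. when $x_i\not\le x_j$. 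I would make this convention explicit first, since the conditions on $\mathcal{G}$ are phrased in terms of \emph{absent} edges.

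First I would define the map $\Phi:\mathcal{M}\to\mathcal{G}$ sending a class of matrices $(a_{i,j})$ to the digraph on vertex set $\{1,\dots,n\}$ whose directed edges are $\{(i,j): a_{i,j}=1\}$. I would verify that the image lands in $\mathcal{G}$ by checking each condition: condition 1 of the matrix ($a_{i,i}=0$) gives no loops; for condition 2 of $\mathcal{G}$, observe that $\{i,j\}\notin E$ means $a_{i,j}=0$, whence $a_{j,i}=1$ by condition 2 of Definition \ref{def:matrices_zeros}, so $\{j,i\}\in E$; similarly $\{i,j\},\{j,k\}\notin E$ translates to $a_{i,j}=a_{j,k}=0$, forcing $a_{i,k}=0$ by condition 3, i.e. $\{i,k\}\notin E$. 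Next I would observe that a permutation matrix conjugation $EAE^{-1}$ corresponds exactly to relabelling vertices by the associated permutation, which is precisely a digraph isomorphism; hence $\Phi$ is well-defined on equivalence classes and respects the quotients on both sides.

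The inverse map $\Psi:\mathcal{G}\to\mathcal{M}$ assigns to a digraph its adjacency matrix after fixing an arbitrary vertex labelling, and the same three conditions guarantee this matrix lies in $\mathcal{M}'$. Because two labellings of the same digraph differ by a permutation, $\Psi$ is well-defined into $\mathcal{M}$, and it is immediate from the definitions that $\Phi$ and $\Psi$ are mutually inverse. Composing the resulting bijection $\mathcal{M}\cong\mathcal{G}$ with the bijection between homeomorphism classes of finite $T_0$-spaces and $\mathcal{M}$ supplied by Theorem \ref{thm:matrices_grafos} yields the claimed correspondence.

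I do not expect a serious obstacle here, as the argument is essentially a bookkeeping transport of Theorem \ref{thm:matrices_grafos} across the tautological identification of a $0/1$ matrix with the adjacency matrix of a digraph. The only point demanding genuine care is the bijection between the equivalence relations: I must confirm that ``equivalence up to permutation conjugation'' on $\mathcal{M}'$ and ``isomorphism'' on $\mathcal{G}$ are carried to one another faithfully, so that the correspondence descends cleanly to the quotient sets rather than merely to the underlying sets of matrices and labelled digraphs.
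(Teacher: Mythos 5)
Your proposal is correct and follows exactly the route the paper intends: the paper states this theorem without proof, treating it as an immediate transport of Theorem \ref{thm:matrices_grafos} across the identification of a matrix in $\mathcal{M}'$ with the adjacency matrix of a digraph, which is precisely the bijection $\Phi$, $\Psi$ you construct (including the matching of permutation conjugation with digraph isomorphism). Your only deviation is that you spell out the bookkeeping the paper leaves implicit, which is harmless.
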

%

Notice that antichains of two elements can be identified in $X_M$ by simply examining entries such that $x_{i,j}=1=x_{j,i}$. This means  that  antichains of two elements correspond to paths of the form $\{x_i,x_j\}, \{ x_j,x_i\}$ in $G_X$. Similarly, antichains of $k$ elements correspond to sequences $i_1,...,i_k$ satisfying $x_{i_l,i_h}=x_{i_h,i_l}=1$ for every $1\leq h,l\leq k$. This indicates that there is a complete subgraph of $G_X$ whose vertices are $\{x_{i_1},...,x_{i_k} \}$ (or a clique). By using these observations and classical results from graph theory, we obtain the following result.
\begin{prop}\label{prop:antichains_size_2} Let $X$ be a finite $T_0$-space. Then 
\begin{itemize}
\item $\frac{\textnormal{tr}(X_M^2)}{2}$ is the number of antichains with $2$ points of $X$.
\item $\frac{\textnormal{tr}(X_M^3)}{6}$ is the number of antichains with $3$ points of $X$.
\end{itemize}
\end{prop}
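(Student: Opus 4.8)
The plan is to connect the trace of powers of $X_M$ to counts of closed walks in the digraph $G_X$, and then use conditions 2 and 3 of Definition \ref{def:matrices_zeros} to force those closed walks to be antichains. Recall the standard fact from algebraic graph theory: for an adjacency matrix $A$ of a digraph, $(A^k)_{i,j}$ counts the number of directed walks of length $k$ from vertex $i$ to vertex $j$, and hence $\mathrm{tr}(A^k) = \sum_i (A^k)_{i,i}$ counts the closed directed walks of length $k$. Applying this with $A = X_M$, I would first observe that $\mathrm{tr}(X_M^2)$ counts the closed walks of length $2$, i.e. ordered pairs $(i,j)$ with $x_{i,j} = x_{j,i} = 1$, and $\mathrm{tr}(X_M^3)$ counts the closed walks of length $3$, i.e. ordered triples $(i,j,k)$ with $x_{i,j}=x_{j,k}=x_{k,i}=1$.

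Next I would show that every such closed walk is in fact a genuine cycle on distinct vertices forming an antichain. For length $2$: a closed walk $i \to j \to i$ with $x_{i,j}=x_{j,i}=1$ has $i \neq j$ since $x_{i,i}=0$ by condition 1, and the pair $\{x_i, x_j\}$ is an antichain precisely because $x_{i,j}=x_{j,i}=1$ means neither $x_i \leq x_j$ nor $x_j \leq x_i$. For length $3$: given $x_{i,j}=x_{j,k}=x_{k,i}=1$, I would first argue the three indices are pairwise distinct (if, say, $i=j$ then $x_{i,i}=1$, contradicting condition 1), and then that $\{x_i,x_j,x_k\}$ is a $3$-element antichain. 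The key point here is that $x_{i,j}=1$ by itself only says $x_i \not\leq x_j$; I must also rule out $x_j < x_i$, $x_k < x_j$, $x_i < x_k$. This is where condition 3 enters: condition 3 (the contrapositive of transitivity, written in terms of $a_{\cdot,\cdot}=0$) forces that a closed directed $3$-walk cannot contain a back-edge with a $0$ entry without creating a contradiction, so all six off-diagonal entries among $i,j,k$ must equal $1$, giving a clique in $G_X$, i.e. an antichain.

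The counting normalization is then routine. An unordered antichain $\{x_i,x_j\}$ of size $2$ yields exactly $2$ closed walks of length $2$ (the two orderings $i\to j\to i$ and $j\to i\to j$), so the number of such antichains is $\mathrm{tr}(X_M^2)/2$. An unordered antichain $\{x_i,x_j,x_k\}$ of size $3$ yields exactly $6$ closed walks of length $3$: the vertices of a triangle in the underlying complete graph can be traversed as a directed closed walk in $3! = 6$ ways (equivalently, $2$ cyclic orientations times $3$ choices of starting vertex), each contributing a term $1$ to $\mathrm{tr}(X_M^3)$; hence the number of size-$3$ antichains is $\mathrm{tr}(X_M^3)/6$. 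The crucial bijection to verify carefully is that in a $3$-element antichain all off-diagonal entries are $1$, so \emph{every} one of the $6$ closed walks has all three visited edges present.

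The main obstacle I expect is the length-$3$ case, specifically proving rigorously that a closed directed $3$-walk with all three forward edges present must be a true antichain rather than, say, a chain $x_k < x_j < x_i$ traversed with some back edges. Since each entry $x_{a,b}=1$ asserts only $x_a \not\leq x_b$ (which includes both the incomparable case and the strictly-greater case), the argument cannot proceed edge-by-edge in isolation; it must invoke condition 3 to propagate these non-relations and derive a contradiction from any assumed comparability. Handling this cleanly --- and confirming there are no degenerate closed $3$-walks that revisit a vertex --- is the delicate part; everything else reduces to the walk-counting formula and elementary bookkeeping.
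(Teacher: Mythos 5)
Your treatment of the first bullet is correct: a closed $2$-walk forces both $x_{i,j}=1$ and $x_{j,i}=1$, which by the definition of $X_M$ says precisely that $x_i$ and $x_j$ are incomparable, and the factor $2$ accounts for the two ordered pairs. The gap is in the length-$3$ case, at exactly the step you flagged as delicate: it is \emph{not} true that condition 3 forces all six off-diagonal entries among $i,j,k$ to equal $1$. Condition 3 only propagates zeros (from $a_{i,j}=a_{j,k}=0$ it concludes $a_{i,k}=0$); starting from the data $x_{i,j}=x_{j,k}=x_{k,i}=1$ plus a single hypothetical zero, say $x_{j,i}=0$, there is no second zero to chain with, so no contradiction can be derived. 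Indeed there is a counterexample. Take $X=\{x_1,x_2,x_3\}$ with $x_2<x_1$ and $x_3$ incomparable to both. Then
$$X_M=\begin{pmatrix} 0&1&1\\ 0&0&1\\ 1&1&0\end{pmatrix},$$
and $1\to 2\to 3\to 1$ is a closed directed $3$-walk ($x_{1,2}=x_{2,3}=x_{3,1}=1$) even though $\{x_1,x_2,x_3\}$ is not an antichain; the reverse orientation is blocked by $x_{2,1}=0$, so $\textnormal{tr}(X_M^3)=3$. Since $X$ has no $3$-element antichain, $\textnormal{tr}(X_M^3)/6=1/2$ is not even an integer, let alone the number of $3$-antichains.

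This gap cannot be repaired, because the second bullet of the statement is itself false: in general one has $\textnormal{tr}(X_M^3)=6A_3+3M_3$, where $A_3$ is the number of $3$-element antichains and $M_3$ is the number of $3$-element subsets containing exactly one comparable pair, the third point being incomparable to both (each such triple supports exactly one directed triangle, hence three closed walks, as the example shows; triples with two or three comparable pairs support none). The paper gives no formal proof --- it identifies antichains with cliques of $G_X$ and appeals to ``classical results from graph theory'' --- and that appeal commits the same oversight as your argument: $X_M$ is not symmetric, so $\textnormal{tr}(X_M^3)$ counts directed triangles, not cliques. For length $2$ the symmetry is automatic, since a closed walk $i\to j\to i$ uses both directions of the same pair; that is why the first bullet (and the consequence that $\textnormal{tr}(X_M^2)=0$ implies $X$ is totally ordered) survives, while the length-$3$ claim does not. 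So your instinct about where the difficulty lay was exactly right; the honest conclusion is that the step fails, and the identity fails with it.
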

Combining this result with Dylworth's theorem:
\begin{prop}\label{prop_no_antichains_size_2_is_chain} Let $X$ be a finite $T_0$-space. If $\textnormal{tr}(X_M^2)=0$, then $X$ is a totally ordered set.
\end{prop}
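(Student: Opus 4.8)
The plan is to leverage Proposition \ref{prop:antichains_size_2}, which already establishes that $\frac{\textnormal{tr}(X_M^2)}{2}$ counts the antichains of size $2$ in $X$. First I would observe that the hypothesis $\textnormal{tr}(X_M^2)=0$ immediately forces the number of $2$-element antichains to be zero. The crux of the argument is then purely order-theoretic: a finite poset with no antichain of size $2$ is totally ordered. I would prove this by noting that having no $2$-element antichain is precisely the statement that $\textnormal{width}(X)=1$, i.e.\ the maximum size of an antichain is $1$.

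Next I would invoke Dilworth's theorem, as the surrounding text signals. Dilworth's theorem states that in a finite poset the maximum size of an antichain equals the minimum number of chains needed to partition the poset. Since $\textnormal{width}(X)=1$, the poset $X$ can be partitioned into a single chain; but a partition into one chain means $X$ itself is that chain, hence $X$ is totally ordered. This chain of implications is clean and requires no computation beyond the bookkeeping already supplied by Proposition \ref{prop:antichains_size_2}.

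I expect the main (and essentially only) obstacle to be a degenerate subtlety rather than a deep one: I should make sure the statement is read correctly so that the argument does not secretly require the absence of \emph{larger} antichains as well. The hypothesis only controls antichains of size exactly $2$ via the trace, but this suffices, because if $X$ contained any antichain of size $k\geq 2$, it would in particular contain one of size $2$ (any two incomparable elements of the larger antichain), contradicting $\textnormal{tr}(X_M^2)=0$. Thus the vanishing of the trace of $X_M^2$ rules out all antichains of size $\geq 2$ at once, and Dilworth (or, even more directly, the observation that every pair of elements is comparable) yields that $X$ is a total order. If one prefers to avoid Dilworth entirely, the final step can be phrased as: no antichain of size $2$ means any two distinct points $x_i,x_j$ are comparable, which is exactly the definition of a totally ordered set.
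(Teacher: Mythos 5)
Your proposal is correct and follows the same route the paper takes: the paper's entire argument is the remark that Proposition \ref{prop:antichains_size_2} combined with Dilworth's theorem yields the result, which is exactly your main line. Your closing observation that Dilworth is not even needed (no $2$-element antichain already means every pair of points is comparable, which is the definition of a total order) is a valid simplification, but it does not change the substance of the argument.
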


Edges in $G_X$ represent that there is no relation between $x_i$ and $x_j$ or $x_j>x_i$. The number of strongly connected components of $G_X$ can be interpreted as a method to ``measure'' the number of relations between the elements of the poset. For example, it is straightforward to verify that a finite totally ordered set $X$ has $|X|$ strongly connected components and the minimal finite model of the $n$-dimensional sphere $S^n$ (see \cite{barmak2007minimal}) has $n+1$ strongly connected components. This number is not a homotopy invariant as the first examples shows.

\begin{prop} Let $X$ be a finite $T_0$-space. If $X$ is not connected, then $G_X$ has only one strongly connected component.
\end{prop}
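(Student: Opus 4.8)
The plan is to understand what the digraph $G_X$ looks like when $X$ is disconnected, and then show that any two vertices lie in a common directed cycle.

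\medskip

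First I would recall the structure of a disconnected finite $T_0$-space. By the earlier proposition on non-connectedness, if $X$ is not connected then $X_M$ is equivalent to a block matrix
\[
\begin{pmatrix}
Y_M & 1_{n\times m} \\
1_{m\times n} & Z_M
\end{pmatrix},
\]
where the off-diagonal blocks are entirely ones. Translating this into the digraph language of $G_X$ (whose adjacency matrix is $X_M$): if $x_i$ belongs to the $Y$-block and $x_j$ to the $Z$-block, then the entries $x_{i,j}=x_{j,i}=1$, so there are edges both from $x_i$ to $x_j$ and from $x_j$ to $x_i$. In other words, every vertex in one block is joined by edges in both directions to every vertex in the other block. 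This is exactly the observation made in the excerpt that a two-element antichain $\{x_i,x_j\}$ corresponds to the pair of opposite edges $\{x_i,x_j\},\{x_j,x_i\}$, and points in different connected components are incomparable, hence form antichains.

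\medskip

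Second, I would show this forces a single strongly connected component. The goal is to prove that any two vertices $x_a,x_b$ of $G_X$ are mutually reachable by directed paths. I would split into cases according to whether $x_a,x_b$ lie in the same block or in different blocks. If they lie in different blocks, they are directly joined by opposite edges, so they are mutually reachable in one step. If they lie in the same block, say both in the $Y$-block, I would route a directed path through the other block: pick any vertex $x_c$ in the $Z$-block (which is nonempty, since the decomposition is genuine, i.e. $n,m\geq 1$), and then $x_a\to x_c\to x_b$ and likewise $x_b\to x_c\to x_a$ are directed paths, using that the cross-block edges exist in both directions. Hence every pair of vertices is mutually reachable, which is precisely the statement that $G_X$ is strongly connected, i.e. has a single strongly connected component.

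\medskip

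I expect the main obstacle to be purely a matter of bookkeeping rather than deep content: one must be careful that both blocks are nonempty (so that the routing argument through the opposite block is available) and that the convention for edge directions in $G_X$ matches the convention $x_{i,j}=1$ used to define the adjacency matrix. The only genuinely substantive input is the block form supplied by the earlier non-connectedness proposition; once that is in hand, the strong connectivity is immediate from the all-ones off-diagonal blocks. I would therefore keep the proof short, citing the block decomposition and then exhibiting the two-step directed paths through the opposite component.
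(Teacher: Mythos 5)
Your proof is correct and follows essentially the same route as the paper: both rest on the observation that any two vertices in different connected components of $X$ are incomparable, hence joined by edges in both directions in $G_X$, and that vertices in the same component can be connected by routing through the other component. Your version merely makes explicit the case analysis and the two-step detour that the paper's proof leaves implicit.
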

\begin{proof}
Without loss of generality we may assume that $X$ has two connected components $X^1$ and $X^2$. For every pair of vertices $x_i,x_j\in G_X$ such that $x_i\in X^1$ and $x_j\in X^2$ there is a directed edge from $x_i$ to $x_j$ and vice versa. This means that for for every pair of vertices one can find a path connecting them in both directions.
\end{proof}
\begin{rem} Let $X$ be a finite $T_0$-space. There is a bijective correspondence between cliques of $G_X$ and antichains of $X$. Hence, the problem of finding the width of a poset $X$ is equivalent to the problem of finding the maximum clique in $G_X$. 
\end{rem}

\section{Invariants}\label{sec:invariantes}
In this section, we examine the relationships between invariants of matrices and homotopy.
\subsection{Determinant}

Let $X$ be a finite $T_0$-space, and let $X_M$ denote its associated matrix. In \cite{chocano2025reduced}, the authors show that $\det(X_M)$ corresponds, up to sign, to the reduced Euler characteristic of $X$. In this section, we show that $|\det(X_M)|$ is both a homotopy invariant and a simple homotopy invariant, using combinatorial techniques based essentially on the basic theory of Alexandroff spaces together with fundamental properties of the determinant.

First, we characterize beat points in terms of the rows and columns of $X_M$.
\begin{thm}\label{thm_beat_points} Let $X=\{ x_i\}$ be a finite $T_0$-space. Then $x_i$ is an up (down) beat point if and only if there exists a row $r_j$ (column $c_j$) in $X_M$ such that $r_i-r_j=-e_i$ ($c_i-c_j=-e_i$) for some $1\leq j \leq n$. 
\end{thm}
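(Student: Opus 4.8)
The plan is to decode the linear-algebraic condition $r_i-r_j=-e_i$ into a statement about the minimal closed sets $F_{x_i}$ and $F_{x_j}$, and then match it against the definition of an up beat point. The key starting observation is that, by the definition of $X_M$, the $k$-th entry of the row $r_i$ vanishes precisely when $x_i\leq x_k$; that is, the zero positions of $r_i$ are exactly the indices $k$ with $x_k\in F_{x_i}$. Thus each row is, up to complementation, the indicator vector of the corresponding $F_{x_i}$, and the whole argument amounts to making this dictionary precise.

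First I would unpack the equation $r_i-r_j=-e_i$ entrywise. Since $x_{i,i}=0$, the $i$-th coordinate forces $x_{j,i}=1$, i.e. $x_j\not\leq x_i$, so $x_i\notin F_{x_j}$; and for every $k\neq i$ it forces $x_{i,k}=x_{j,k}$, i.e. $x_k\in F_{x_i}\iff x_k\in F_{x_j}$. Combining these two facts, the row identity is equivalent to the set-theoretic statement $F_{x_i}=\{x_i\}\sqcup F_{x_j}$ with $x_i\notin F_{x_j}$, equivalently $\hat{F}_{x_i}=F_{x_j}$.

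Next I would identify this with the up beat point condition. Since $F_{x_j}=\{y:y\geq x_j\}$ always has minimum $x_j$, the equality $\hat{F}_{x_i}=F_{x_j}$ exhibits $x_j$ as a minimum of $\hat{F}_{x_i}$, so $x_i$ is an up beat point. For the converse, if $x_i$ is an up beat point with $x_j=\min\hat{F}_{x_i}$, I would verify that $F_{x_j}=\hat{F}_{x_i}$ exactly: the inclusion $F_{x_j}\subseteq\hat{F}_{x_i}$ follows from $x_j>x_i$ together with transitivity, while $\hat{F}_{x_i}\subseteq F_{x_j}$ is precisely the minimality of $x_j$. Retranslating $\hat{F}_{x_i}=F_{x_j}$ back into the rows via the indicator dictionary reproduces $r_i-r_j=-e_i$.

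Finally, the down beat case follows formally by duality. By Remark~\ref{rem:up_son_down_opuesto}, $x_i$ is a down beat point of $X$ if and only if it is an up beat point of $X$ endowed with the opposite order, and by Proposition~\ref{prop:orden_opuesto_matriz} the matrix of the opposite order is $X_M^\intercal$, which interchanges rows and columns. Applying the row statement already proved to $X_M^\intercal$ therefore yields the column statement $c_i-c_j=-e_i$. The only point that demands genuine care is the bookkeeping at the distinguished index $i$: one must track that the diagonal zero $x_{i,i}=0$ together with the $-e_i$ term is exactly what encodes $x_i\notin F_{x_j}$, and that the minimality of $x_j$ delivers the \emph{equality} $F_{x_j}=\hat{F}_{x_i}$ rather than mere containment. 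Everything else is a routine translation between zero patterns of $X_M$ and containment in the $F_{x_i}$.
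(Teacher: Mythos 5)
Your proof is correct, and at bottom it is the same strategy as the paper's: decode the entrywise meaning of $r_i-r_j=-e_i$ through the dictionary ``$x_{i,k}=0 \iff x_k\in F_{x_i}$'', and then get the column/down-beat half by duality via Remark~\ref{rem:up_son_down_opuesto} and Proposition~\ref{prop:orden_opuesto_matriz}. The difference is in how the decoding is organized, and your organization buys something real. The paper works coordinate by coordinate: for the forward direction it runs the case analysis (i)--(iii), and for the converse it shows $x_i<x_j$ and then derives a contradiction from the existence of some $x_k$ with $x_i<x_k<x_j$, concluding that $x_j$ is the minimum of $\hat F_{x_i}$. As literally written, that contradiction only excludes elements strictly \emph{between} $x_i$ and $x_j$, i.e.\ it shows $x_i\prec x_j$; minimality of $x_j$ in $\hat F_{x_i}$ also requires excluding $x_k>x_i$ with $x_j\nleq x_k$ (e.g.\ incomparable to $x_j$), which does follow from the hypothesis $x_{i,k}=x_{j,k}$ but is not spelled out there. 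Your reformulation of the row condition as the single set identity $\hat F_{x_i}=F_{x_j}$ closes this automatically: since $F_{x_j}$ always has minimum $x_j$, the identity immediately exhibits $x_j$ as the minimum of all of $\hat F_{x_i}$, with no residual case, and the reverse direction becomes the two clean inclusions (transitivity one way, minimality the other). So: same translation-based approach, but your packaging around one equivalence rather than entrywise case analysis is tighter and, on the converse direction, more complete than the paper's own write-up.
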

\begin{proof}
Let us suppose that $x_i$ is an up beat point, so $\hat{F}_{x_i}$ has a minimum that we denote by $x_j$. We study cases: (i) If $x_{j,k}=0$ with $k\neq i$, then $x_j\in U_{x_k}$ so $x_i\in U_{x_k}$ and $x_{i,k}=0$ (ii) If $x_{j,k}=1$ with $k\neq i$ and  $x_{i,k}=0$, then $x_j\notin U_{x_k}$ and $x_i\in U_{x_k}$. This means that $x_i< x_k,x_j$ and $x_j\nleq x_k$, but the definition of up beat point implies that $x_i<x_j<x_k$, which entails the contradiction. Therefore, $x_{j,k}=1$ with $k\neq i$ implies $x_{i,k}=1$ (iii) By hypothesis $x_{ii}=0$ and $x_{j,i}=1$ since  $x_i\in U_{x_i}$ and $x_j\notin U_{x_i}$. Thus, $r_i-r_j=-e_i$. 


Consider that there are two rows $r_i$ and $r_j$ satisfying $r_i-r_j=-e_i$. Therefore, $r_i$ and $r_j$ are not equal in the $i$-th coordinate, i.e., $x_{i,k}=x_{j,k}$ for every $k\neq i$ and $x_{j,i}=1$. On the other hand, $x_i<x_j$ because $x_{j,j}=0$ and $x_{i,j}-x_{j,j}=0$. We argue by contradiction, suppose there exists $x_k$ with $x_i<x_k<x_j$. Then $x_{j,k}=1$ and $x_{i,k}=0$, which gives the contradiction. This gives that $\hat{F}_{x_i}$ has a minimum given by $x_j$.

The other case of the result follows immediately from these arguments, Remark \ref{rem:up_son_down_opuesto} and Proposition \ref{prop:orden_opuesto_matriz}.
\end{proof}
As an immediate consequence of this result, we obtain:
\begin{cor} Let $X$ be a finite $T_0$-space. If  $c_i-c_j\neq -e_i$ and $r_i-r_j\neq -e_i$ for every $1\leq i,j\leq n$, then $X$ is not contractible .
\end{cor}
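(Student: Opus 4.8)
The plan is to recognize this corollary as the contrapositive of a standard fact connecting beat points, contractibility, and the matrix characterization just established in Theorem~\ref{thm_beat_points}. The essential observation is that the hypothesis ``$c_i-c_j\neq -e_i$ and $r_i-r_j\neq -e_i$ for every $1\leq i,j\leq n$'' is precisely the statement that $X_M$ encodes a space with no up beat points and no down beat points, i.e. a minimal finite space. Thus the proof reduces to translating the matrix condition into a topological condition and then invoking Theorem~\ref{thm:stong_cores_homeomorphic}.

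First I would apply Theorem~\ref{thm_beat_points} directly. By that theorem, $x_i$ is a down beat point if and only if there is a column $c_j$ with $c_i-c_j=-e_i$, and $x_i$ is an up beat point if and only if there is a row $r_j$ with $r_i-r_j=-e_i$. The hypothesis of the corollary says no such rows or columns exist for any $i,j$, so $X$ has neither up beat points nor down beat points. By definition, this means $X$ is a minimal finite space.

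Next I would argue that a minimal finite space with more than one point cannot be contractible. The quickest route is Theorem~\ref{thm:stong_cores_homeomorphic}: a finite $T_0$-space $X$ is contractible if and only if it is homotopy equivalent to a point, which (taking $Y$ to be a one-point space, whose core is itself) holds if and only if the core of $X$ is a single point. Since $X$ has no beat points, $X$ is its own core. Hence if $X$ were contractible, its core $X$ would be homeomorphic to a one-point space, forcing $|X|=1$. The edge case $|X|=1$ should be handled by noting that a single point is trivially contractible, but the hypothesis $c_i-c_j\neq-e_i$ is meant for $n\geq 2$; alternatively one simply states the conclusion for $X$ with at least two points, which is the intended reading.

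I do not anticipate a serious obstacle here, since the corollary is labeled an immediate consequence and the real content lives in Theorem~\ref{thm_beat_points}. The only point requiring minor care is the logical structure: the corollary asserts non-contractibility from the absence of the matrix patterns, so I would phrase it as a contrapositive, observing that a contractible minimal finite space must be a point, and a point (for $n\geq 2$) does admit rows and columns satisfying the excluded relations. The cleanest write-up simply chains ``no matrix pattern $\Rightarrow$ no beat points $\Rightarrow$ $X$ is minimal $\Rightarrow$ if contractible then a point,'' and concludes that $X$ is not contractible whenever it has at least two points.
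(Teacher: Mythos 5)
Your proof is correct and follows exactly the route the paper intends: Theorem~\ref{thm_beat_points} translates the matrix hypothesis into the absence of beat points, so $X$ is a minimal finite space and is its own core, whence by Theorem~\ref{thm:stong_cores_homeomorphic} contractibility would force $X$ to be a single point. The paper gives no written proof (calling it immediate), and your careful note about the one-point edge case is a legitimate refinement rather than a deviation.
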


\begin{cor}\label{cor:quitarbeatpoint_homotopy} Let $X$ be a finite $T_0$-space and $x_i\in X$ a beat point. Then $\det (X)=-\det (X\setminus x_i)$.
\end{cor}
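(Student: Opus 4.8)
The plan is to read off, via Theorem~\ref{thm_beat_points}, that a beat point produces two nearly identical rows (or columns) of $X_M$, and then to exploit this through an elementary determinant manipulation. Suppose first that $x_i$ is an up beat point. By Theorem~\ref{thm_beat_points} there is an index $j$ with $r_i-r_j=-e_i$. The first step is the row operation that replaces $r_i$ by $r_i-r_j$; since this subtracts one row from another it leaves $\det(X_M)$ unchanged, and it turns the $i$-th row into $-e_i$, a row whose only nonzero entry is a $-1$ in position $i$.

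Next I would expand the resulting determinant along this $i$-th row. Only the diagonal entry survives, so $\det(X_M)=(-1)\cdot(-1)^{i+i}M_{i,i}=-M_{i,i}$, where $M_{i,i}$ is the minor obtained by deleting row $i$ and column $i$. Because the row operation modified only row $i$, and deleting row $i$ discards that modified row entirely, $M_{i,i}$ coincides with the determinant of the submatrix of the original $X_M$ obtained by striking out row $i$ and column $i$.

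The key identification is that this submatrix is precisely $(X\setminus\{x_i\})_M$: the order on $X\setminus\{x_i\}$ is the restriction of the order on $X$, so $x_k\leq x_l$ holds in the subspace exactly when it holds in $X$, and hence the $(k,l)$ entries agree for all $k,l\neq i$. Combining the three steps yields $\det(X_M)=-\det\bigl((X\setminus\{x_i\})_M\bigr)$, which is the claim. The down beat point case is handled symmetrically: Theorem~\ref{thm_beat_points} now gives a column relation $c_i-c_j=-e_i$, so I would run the same argument with the column operation replacing $c_i$ by $c_i-c_j$ and a cofactor expansion along column $i$. Alternatively, one can pass to the opposite order, where a down beat point becomes an up beat point (Remark~\ref{rem:up_son_down_opuesto}), apply the up beat point case to $X_M^\intercal$ (Proposition~\ref{prop:orden_opuesto_matriz}), and use $\det(X_M^\intercal)=\det(X_M)$.

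I expect the only real care to be in the sign bookkeeping of the cofactor expansion and in confirming that the induced-subspace matrix matches the minor; both become routine once the chosen row (column) has been reduced to $-e_i$. There is no genuine obstacle here, since Theorem~\ref{thm_beat_points} already supplies the essential structural input, namely that a beat point forces a row (column) of $X_M$ to differ from another by a single standard basis vector.
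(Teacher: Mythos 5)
Your proof is correct and follows essentially the same route as the paper: subtract the row (column) supplied by Theorem~\ref{thm_beat_points}, expand the determinant along the resulting row (column) $-e_i$, and identify the remaining minor with $(X\setminus\{x_i\})_M$. The paper states this in one sentence; you have simply filled in the sign bookkeeping and the observation that the deleted-row-and-column submatrix is the matrix of the subspace.
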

\begin{proof}
Suppose that $x_j$ is the minimum (maximum) of $\hat{F}_{x_i}$ ($\hat{U}_{x_i}$). By Theorem \ref{thm_beat_points}, subtracting the $j$-row (column) from the $i$-row (column) and using the Laplace expansion along the $i$-row (column) one gets $\det (X)=-\det (X\setminus x_i)$. 
\end{proof}

\begin{cor}\label{cor:contractible_implies_determinant_zero} Let $X$ and $Y$ be finite $T_0$-spaces.
\begin{enumerate}
\item If $X$ and $Y$ are minimal finite spaces and $X$ is homotopy equivalent to $Y$, then $\det(X)=\det(Y)$.
\item If $X$ is homotopy equivalent to $Y$, then $|\det (X)|=|\det (Y)|$.
\item If $X$ is contractible, then $\det(X)=0$.
\end{enumerate}
\end{cor}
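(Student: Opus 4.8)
The plan is to derive all three statements from the two corollaries that precede it, namely Corollary~\ref{cor:quitarbeatpoint_homotopy} (removing a beat point flips the sign of the determinant) and Theorem~\ref{thm:stong_cores_homeomorphic} (homotopy-equivalent finite $T_0$-spaces have homeomorphic cores). The key bridging observation is that if $x_i$ is a beat point of $X$, then passing from $X$ to the strong deformation retract $X\setminus x_i$ changes $\det$ only by a factor of $-1$, hence does not change $|\det|$. Iterating this collapse until no beat points remain produces the core, so the absolute value of the determinant is constant along the process of taking a core.

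First I would prove statement~(1). Let $X$ and $Y$ be minimal finite spaces that are homotopy equivalent. By Theorem~\ref{thm:stong_cores_homeomorphic} their cores are homeomorphic; but a minimal finite space is its own core, so $X$ and $Y$ are themselves homeomorphic. By Theorem~\ref{thm:matrices_grafos} homeomorphic spaces determine the same class in $\mathcal{M}$, meaning $X_M$ and $Y_M$ are conjugate by a permutation matrix $E$, i.e.\ $Y_M = E X_M E^{-1}$. Since $\det(E)\det(E^{-1}) = 1$, we get $\det(Y_M) = \det(X_M)$, which is the assertion. Note this gives equality of the determinant itself, not merely its absolute value, because conjugation preserves the determinant exactly.

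Next I would prove statement~(2) for arbitrary homotopy-equivalent $X$ and $Y$. Let $X_1$ and $Y_1$ be their cores; these are obtained from $X$ and $Y$ respectively by removing beat points one at a time. Each such removal multiplies the determinant by $-1$ by Corollary~\ref{cor:quitarbeatpoint_homotopy}, so $|\det(X)| = |\det(X_1)|$ and $|\det(Y)| = |\det(Y_1)|$. By Theorem~\ref{thm:stong_cores_homeomorphic} the cores $X_1$ and $Y_1$ are homeomorphic, and homeomorphic spaces have equal determinants by the conjugation argument above; in particular $|\det(X_1)| = |\det(Y_1)|$. Chaining these equalities yields $|\det(X)| = |\det(Y)|$. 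Finally, statement~(3) is the special case where $Y$ is a point: if $X$ is contractible it is homotopy equivalent to the one-point space, whose matrix is the $1\times 1$ zero matrix with determinant $0$, so $|\det(X)| = 0$ and hence $\det(X) = 0$.

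I do not anticipate a genuine obstacle here, since the real content is already packaged in the earlier results; the proof is essentially an assembly. The one point to state carefully is that the sign ambiguity $\det(X) = -\det(X\setminus x_i)$ forces us to pass to absolute values in~(2), which is precisely why~(1) must restrict to minimal spaces (where no collapses occur and the determinant is preserved on the nose) while~(2) and~(3) only control $|\det|$. I would also make sure the conjugation step is stated once and reused, so that ``homeomorphic implies equal determinant'' is available cleanly for both~(1) and the core comparison in~(2).
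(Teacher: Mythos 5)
Your proof is correct and follows exactly the route the paper intends: the paper states this corollary without proof as an immediate consequence of Corollary~\ref{cor:quitarbeatpoint_homotopy} (sign flip under beat-point removal) together with Theorem~\ref{thm:stong_cores_homeomorphic} (homeomorphic cores) and the permutation-conjugation invariance of the determinant from Theorem~\ref{thm:matrices_grafos}. Your assembly of these three ingredients, including the observation that the sign ambiguity forces absolute values in parts~(2) and~(3) but not in~(1), is precisely the intended argument.
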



\begin{rem}\label{rem_contractible_no-recibe_operaciones} Notice that, given a contractible finite $T_0$-space $X$, after applying elementary row and column operations on $X_M$, one may obtain a matrix $(y_{i,j})_{i,j=1,...,n}$ such that $(y_{i,j})_{i,j=1,...,n-1}$ is the identity matrix and $y_{n,i}=y_{i,n}=0$ for every $1\leq i\leq n$.
\end{rem}

\begin{thm}\label{thm_weak_beat_point} Let $X$ be a finite $T_0$-space. If $x_i\in X$ is a weak beat point, then $\det (X)=-\det (X\setminus x_i )$.
\end{thm}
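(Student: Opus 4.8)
The plan is to generalise the single row operation used for strong beat points in Corollary \ref{cor:quitarbeatpoint_homotopy}. There, subtracting one row $r_j$ from $r_i$ produced $-e_i$; here no single row will do, but I expect a suitable \emph{integer combination} of the rows indexed by $\hat F_{x_i}$ to have the same effect. I will first treat an up weak beat point, so that $A:=\hat F_{x_i}$ is contractible, and recover the down case at the end by passing to the opposite order: by Proposition \ref{prop:orden_opuesto_matriz} the matrix of the opposite order is $X_M^\intercal$, $\det X_M=\det X_M^\intercal$, and an up weak beat point of $X$ is a down weak beat point of $X$ with the opposite topology (the weak analogue of Remark \ref{rem:up_son_down_opuesto}, since $\hat F_{x_i}$ for $X$ is $\hat U_{x_i}$ for the opposite order).

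After relabelling so that $x_i$ comes first, then the elements of $A$, then $C:=X\setminus F_{x_i}$, I will record the shape of the relevant rows. The row $r_i$ is $0$ exactly on $F_{x_i}=\{x_i\}\cup A$ and $1$ on $C$. For $j\in A$ the row $r_j$ has a $1$ in column $i$ (as $x_j>x_i$), restricts on the columns of $A$ to the corresponding row of the submatrix $A_M$, and---crucially---is identically $1$ on the columns of $C$: no element of $A$ can lie below an element of $C$, since that would place the element of $C$ above $x_i$ and hence inside $A$. Thus the block of $X_M$ with rows in $A$ and columns in $C$ equals $1_{|A|\times|C|}$.

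The heart of the argument is to produce integer coefficients $(c_j)_{j\in A}$ with $\sum_{j\in A}c_j r_j=(1,\,0_A,\,1_C)=r_i+e_i$ (meaning $1$ in column $i$, $0$ throughout the columns of $A$, and $1$ throughout the columns of $C$). Splitting into the three coordinate blocks, this requires $\sum_j c_j=1$ in column $i$, $c^\intercal A_M=0$ on the columns of $A$, and $\sum_j c_j=1$ again on the columns of $C$ (using the all-ones block just identified); so it suffices to find a left null vector $c$ of $A_M$ whose entries sum to $1$. I will exhibit one explicitly via the Möbius function: writing $\zeta=1_{|A|\times|A|}-A_M$ for the incidence matrix of $A$ (invertible, being unitriangular in a linear extension) and $\mu=\zeta^{-1}$, I set $c^\intercal=1_{1\times|A|}\,\mu$. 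Since $A_M=1_{|A|\times|A|}-\zeta$, a short computation gives $c^\intercal A_M=(s-1)\,1_{1\times|A|}$ and $\sum_j c_j=s$, where $s=1_{1\times|A|}\,\mu\,1_{|A|\times1}=\sum_{x_j\le x_k}\mu(x_j,x_k)$. By Philip Hall's theorem this signed count of chains of $A$ is the Euler characteristic $\chi(\mathcal K(A))$, which is $1$ because $A$ is contractible (hence $\mathcal K(A)$ is contractible by McCord's theorem). Therefore $s=1$, giving simultaneously $c^\intercal A_M=0$ and $\sum_j c_j=1$; note $c$ has integer entries, as $\mu$ does.

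With such $c$ in hand, the operation $r_i\mapsto r_i-\sum_{j\in A}c_j r_j$ is a composition of integer elementary row operations (each index $j\in A$ differs from $i$), so it preserves $\det X_M$, and it turns row $i$ into $-e_i$. Expanding the determinant along this row yields $\det X_M=-\det(X\setminus x_i)$, since deleting row $i$ and column $i$ annihilates the modification and leaves exactly $(X\setminus x_i)_M$. I expect the only genuine obstacle to be the identity $s=1$: plain invertibility of $\zeta$ (or Remark \ref{rem_contractible_no-recibe_operaciones}, which already gives a one-dimensional left null space) only produces \emph{some} null vector, whereas the extra information that it can be normalised to coordinate sum $1$ is precisely the contractibility of $A$, encoded through $\chi(\mathcal K(A))=1$. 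Everything else is bookkeeping with the block structure of $X_M$.
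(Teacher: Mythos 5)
Your proof is correct, and it takes a genuinely different route from the paper's. The paper, working with a down weak beat point, first subtracts the column of a neighbouring point and then disposes of the contractible link by citing Remark \ref{rem_contractible_no-recibe_operaciones}: a long sequence of row \emph{and} column operations (essentially Stong's beat-point reduction of a contractible space) brings $X_M$ to a matrix $X_M^{(3)}$ with $c_i^{(3)}=-e_i$, after which it must still track the products $L,R$ of elementary matrices and verify $(X\setminus x_i)_M=L_i^{-1}X_M^{(4)}R_i^{-1}$ in order to identify the surviving minor --- the most delicate step of that proof. You instead change a single row, with coefficients $c^\intercal=1_{1\times |A|}\,\mu$ taken from the M\"obius matrix of the link $A=\hat F_{x_i}$, so the $(i,i)$-minor after the operation is \emph{literally} $(X\setminus x_i)_M$ and no bookkeeping is needed; all the topology is concentrated in the single identity $s=\sum_{x\le y}\mu(x,y)=\chi(\mathcal K(A))=1$ (Philip Hall's theorem plus McCord). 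The trade-off: the paper stays inside the elementary beat-point machinery it has already built, whereas you import the M\"obius function and Hall's theorem; in exchange your argument is shorter and strictly stronger, since it only uses $\chi(\mathcal K(\hat F_{x_i}))=1$ rather than contractibility, so the sign flip holds for any point whose link has Euler characteristic one (e.g.\ a homotopically trivial but non-contractible link), not just for weak beat points. Pushed one step further, your computation gives a closed formula: writing $\zeta_X=1_{n\times n}-X_M$ for the incidence matrix, $n=|X|$, the matrix determinant lemma applied to $X_M=1_{n\times 1}1_{1\times n}-\zeta_X$ together with the same Hall identity yields
\begin{equation*}
\det(X_M)=(-1)^{n}\bigl(1-\chi(\mathcal K(X))\bigr)=(-1)^{n+1}\,\tilde{\chi}(\mathcal K(X)),
\end{equation*}
which renders Theorem \ref{thm_weak_beat_point}, Corollary \ref{cor:quitarbeatpoint_homotopy} and Corollary \ref{cor:determinante_invariante_simple} immediate and shows that $|\det|$ is in fact an invariant of the weak homotopy type. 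One last remark: your reduction of the down case to the up case uses that contractibility of the link is preserved under passing to the opposite order; this is standard (beat points, hence cores, are self-dual), and it is exactly the same implicit step the paper takes when it invokes Remark \ref{rem:up_son_down_opuesto} --- stated there only for beat points --- in the weak setting.
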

\begin{proof}
Without loss of generality, we may assume that $x_i$ is a down weak beat point and  $x_j\leq x_i$ for every $j\leq i$. That is, the submatrix $(x_{k,s})_{1\leq k,s\leq i}$ corresponds to $U_{x_i}$. Subtract $c_{i-1}$ from $c_i$ in $X_M$. Then, the resulting matrix $X_M^{(1)}=(x_{s,k}^{(1)})$ satisfies $x^{(1)}_{s,i}=0$ if $s>i$, $x^{(1)}_{i,i}=-1$ and $x^{(1)}_{s,i}=-x_{s,i-1}$ if $s<i$. We prove the last assertion. By construction $x_{s,i}=0$ if $s\leq i$ and $x_{s,i}=1$ if $s>i$. Hence, $x_{s,i}-x_{s,i-1}=-x_{s,i-1} $ for every $s<i$. Suppose that $x_{s,i-1}=0$ for some $s> i$, then $x_s<x_{i-1}$, which gives that $x_s<x_i$ and therefore the contradiction. This shows that $x_{s,i-1}=x_{s,i}$ for $s>i$. Clearly, $x_{i,i-1}=1$  because $x_{i-1}<x_{i}$, so $x_{i,i}-x_{i,i-1}=-1$.

Since $\hat{U}_{x_i}$ is contractible, by applying elementary row and column operations between the first $i-1$ rows and columns one has that $X_M^{(2)}=(x_{s,k}^{(2)})$ satisfies $(x^{(2)}_{s,k})_{1\leq s,k\leq i-2}$ is the identity matrix and $x^{(2)}_{i-1,k}=x^{(2)}_{k,{i-1}}=0$ if $k<i$ (see Remark \ref{rem_contractible_no-recibe_operaciones}). Again, by applying elementary operations using the first $i-2$ rows and columns, we obtain $X_M^{(3)}$ such that $c^{(3)}_{i}=-e_i$. Denoting $X_M^{(4)}$ to $X_{M}^{(3)}$ after removing its $i$-th column and row, we get $\det(X_M)=-\det(X_M^{(4)})$. On the other hand, note that in the process to get $X_M^{(3)}$ we do not have used the $i$-th row or column of $X_M$. Let $L$ and $R$ denote the product of the elementary matrices satisfying $X_M^{(3)}=LX_MR$. Let $L_i$ ($R_i$) denote the matrix $L$ ($R$) after removing its $i$-th column and row. Thus, $(X\setminus{x_i})_M=L_i^{-1}X_M^{(4)}R_i^{-1}$.

The result for the other case can be immediately derived from these arguments, Remark \ref{rem:up_son_down_opuesto} and Proposition \ref{prop:orden_opuesto_matriz}.



\end{proof}

\begin{cor}\label{cor:determinante_invariante_simple} Let $X$ and $Y$ be finite $T_0$-spaces. If $X$ and $Y$ are simple homotopy equivalent, then $|\det (X)|=|\det (Y)|$.
\end{cor}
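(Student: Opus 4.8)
The plan is to reduce the statement to the single-step result already proved in Theorem \ref{thm_weak_beat_point}. Recall that two finite $T_0$-spaces $X$ and $Y$ are simple homotopy equivalent precisely when one can be obtained from the other by adding and removing weak beat points one at a time. So there exists a finite sequence of finite $T_0$-spaces
\[
X = Z_0,\ Z_1,\ \ldots,\ Z_m = Y,
\]
where for each index $k$, the space $Z_{k+1}$ is obtained from $Z_k$ by either deleting a single weak beat point or adjoining one (the latter being the inverse of a deletion). The whole argument then amounts to tracking how $|\det|$ behaves along each elementary move and concatenating.

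First I would handle one elementary deletion. If $Z_{k+1} = Z_k \setminus \{x\}$ where $x$ is a weak beat point of $Z_k$, then Theorem \ref{thm_weak_beat_point} gives $\det(Z_k) = -\det(Z_{k+1})$, and taking absolute values yields $|\det(Z_k)| = |\det(Z_{k+1})|$. Next I would observe that the reverse move (adding a weak beat point) is symmetric: if $Z_{k+1}$ is obtained from $Z_k$ by \emph{adding} a weak beat point $x$, then $Z_k = Z_{k+1} \setminus \{x\}$ with $x$ a weak beat point of $Z_{k+1}$, so the same theorem applied to $Z_{k+1}$ gives $\det(Z_{k+1}) = -\det(Z_k)$, hence again $|\det(Z_k)| = |\det(Z_{k+1})|$. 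Thus a single elementary move of either type preserves the absolute value of the determinant.

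Finally I would chain these equalities along the whole sequence: since $|\det(Z_k)| = |\det(Z_{k+1})|$ for every $0 \le k < m$, transitivity of equality gives
\[
|\det(X)| = |\det(Z_0)| = |\det(Z_1)| = \cdots = |\det(Z_m)| = |\det(Y)|,
\]
which is the claim. I do not anticipate a genuine obstacle here, as the real work was already done in Theorem \ref{thm_weak_beat_point}; the only point deserving a word of care is the symmetry argument for the additive move, ensuring that ``adding a weak beat point to $Z_k$'' is literally the statement that $x$ is a weak beat point of the larger space $Z_{k+1}$, so that the theorem applies verbatim to the deletion $Z_{k+1} \setminus \{x\}$. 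Once that is noted, the corollary follows by a routine induction on the length $m$ of the simple homotopy sequence.
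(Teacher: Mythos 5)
Your proof is correct and is exactly the argument the paper intends: the corollary is stated as an immediate consequence of Theorem \ref{thm_weak_beat_point} (the paper gives no separate proof), and your write-up simply spells out the chain of elementary moves with the sign-flipping identity $\det(Z_k)=-\det(Z_{k+1})$ killed by taking absolute values. The symmetry observation for the ``adding'' move and the induction on the length of the sequence are precisely the routine details being left implicit.
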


Given a finite simplicial complex $K$, we define $\det(K):=|\det(\mathcal{X}(K))|$.
\begin{cor}\label{cor_det_simpl_hom} Let $K$ and $L$ be two simplicial complexes. If $K$ and $L$ have the same simple homotopy type, then $\det(K)=\det(L)$.
\end{cor}
\begin{proof}
It is an immediate consequence of \cite[Corollary 3.11]{barmak2008simple} and Corollary \ref{cor:determinante_invariante_simple}.
\end{proof}

From Corollaries \ref{cor:contractible_implies_determinant_zero} \ref{cor:determinante_invariante_simple}, \ref{cor_det_simpl_hom}, and \cite[Theorem~1(b)]{chocano2025reduced}, we deduce that the reduced Euler characteristic is both a homotopy invariant and a simple homotopy invariant in the setting of finite spaces, and a simple homotopy invariant for simplicial complexes.

\subsection{Rank}
Consider $X$ to be a finite $T_0$-space. Our focus is to investigate the relationship between the rank of $X_M$, denoted as $\text{rank}(X)$, and the homotopy type of $X$. For this purpose, we introduce a new term, $\overline{\text{rank}}(X)$, which is defined as the difference between the cardinality of $X$ and the rank of $X$, i.e., $\overline{\text{rank}}(X)=|X|-\text{rank}(X)$.

\begin{thm} Let $X$ and $Y$ be finite $T_0$-spaces. If $X$ is homotopy equivalent to $Y$, then $\overline{\textnormal{rank}}(X)=\overline{\textnormal{rank}}(Y)$.
\end{thm}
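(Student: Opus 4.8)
The plan is to reduce the statement about homotopy equivalence to a statement about beat points, and then to show that removing a single beat point from $X$ changes neither $|X|-\operatorname{rank}(X)$ in a way that matters. By Theorem~\ref{thm:stong_cores_homeomorphic} (Stong), $X$ and $Y$ are homotopy equivalent precisely when their cores $X_1$ and $Y_2$ are homeomorphic. Homeomorphic finite $T_0$-spaces give equivalent matrices in $\mathcal{M}$, i.e. $(X_1)_M$ and $(Y_2)_M$ are conjugate by a permutation matrix $E$, and conjugation preserves rank; since they also have the same cardinality, $\overline{\operatorname{rank}}(X_1)=\overline{\operatorname{rank}}(Y_2)$. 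Thus it suffices to prove that $\overline{\operatorname{rank}}$ is unchanged when one passes from a finite $T_0$-space to the space obtained by deleting a single beat point, because the core is reached from $X$ by a finite sequence of such deletions (and likewise for $Y$).

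So the core of the argument is the following claim: if $x_i\in X$ is a beat point, then $\overline{\operatorname{rank}}(X)=\overline{\operatorname{rank}}(X\setminus x_i)$, equivalently $\operatorname{rank}(X_M)=\operatorname{rank}((X\setminus x_i)_M)+1$. This is exactly the bookkeeping that already appears in the proof of Corollary~\ref{cor:quitarbeatpoint_homotopy}. Without loss of generality assume $x_i$ is an up beat point; by Theorem~\ref{thm_beat_points} there is a row $r_j$ with $r_i-r_j=-e_i$. First I would perform the elementary row operation replacing $r_i$ by $r_i-r_j$, which turns the $i$-th row into $-e_i$ while leaving the rank unchanged. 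Now the $i$-th row has a single nonzero entry, in the $i$-th column, so I can use it to clear the rest of the $i$-th column by column operations, again without changing the rank. After these operations the matrix is block-equivalent to a matrix having $-1$ in position $(i,i)$, zeros elsewhere in row $i$ and column $i$, and the remaining $(n-1)\times(n-1)$ block equal to $(X\setminus x_i)_M$ (these are precisely the entries untouched by the operations, which involved only rows/columns $i$ and $j$). Hence $\operatorname{rank}(X_M)=1+\operatorname{rank}((X\setminus x_i)_M)$, giving $\overline{\operatorname{rank}}(X)=\overline{\operatorname{rank}}(X\setminus x_i)$.

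The down beat point case follows by the standard duality already used repeatedly: by Remark~\ref{rem:up_son_down_opuesto} a down beat point of $X$ is an up beat point of $X$ with the opposite topology, whose matrix is $X_M^\intercal$ by Proposition~\ref{prop:orden_opuesto_matriz}; since $\operatorname{rank}(X_M^\intercal)=\operatorname{rank}(X_M)$, the conclusion transfers verbatim. Iterating the single-deletion claim down to a core, and combining with the permutation-invariance of rank, yields $\overline{\operatorname{rank}}(X)=\overline{\operatorname{rank}}(X_1)=\overline{\operatorname{rank}}(Y_2)=\overline{\operatorname{rank}}(Y)$.

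I expect the only delicate point to be verifying that the residual $(n-1)\times(n-1)$ block really is $(X\setminus x_i)_M$ and not merely some rank-equivalent matrix, i.e. that the row/column operations clearing row and column $i$ never alter the surviving block. This is controlled by the fact that the operation used is subtraction of $r_j$ (resp. $c_j$) and that after forming $-e_i$ the clearing operations add multiples of row/column $i$, which have zero entries in the surviving positions. Since we need the block to agree on the nose only to identify it combinatorially — for the rank count any matrix equivalent to $(X\setminus x_i)_M$ would suffice — this obstacle is mild, and in fact the argument is the exact analogue of the Laplace-expansion bookkeeping in Corollary~\ref{cor:quitarbeatpoint_homotopy}, now read off at the level of rank rather than determinant.
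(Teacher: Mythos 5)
Your proof is correct and follows essentially the same route as the paper: reduce to cores via Stong's theorem, and realize each beat-point deletion by elementary operations (coming from Theorem~\ref{thm_beat_points}) that split off a single pivot while leaving the matrix of the smaller space untouched — the paper compresses exactly this into ``after performing certain row and column operations'' $X_M$ becomes $\left(\begin{smallmatrix} I_n & 0 \\ 0 & \overline{X}_M \end{smallmatrix}\right)$, then compares the equivalent core matrices. One cosmetic slip: once row $i$ has been turned into $-e_i$, the remaining entries of column $i$ are cleared by \emph{row} operations (adding multiples of row $i$ to the other rows), not column operations; your final paragraph describes the mechanism correctly, so this is only a wording issue.
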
  
\begin{proof}
Assume that $X$ and $Y$ are homotopy equivalent. Consequently, their cores are homeomorphic. This implies that $X_M$ and $Y_M$ after performing certain row and column operations can be respectively represented as
$$\begin{pmatrix}
I_n & 0\\
0 & \overline{X}_M
\end{pmatrix}\quad \quad \quad \begin{pmatrix}
I_m & 0\\
0 & \overline{Y}_M,
\end{pmatrix}$$  
where $\overline{X}_M$ ($\overline{Y}_M$) denotes the matrix of the core of $X$ ($Y$), and $\overline{X}_M$ and $\overline{Y}_M$ are equivalent matrices (see Theorem \ref{thm:stong_cores_homeomorphic}). Therefore, we have $|X|-\text{rank}(X)=|Y|-\text{rank}(Y)$. 
\end{proof}
\begin{cor}\label{cor_rank_simple_homotopy} Let $X$ and $Y$ be finite $T_0$-spaces. If $X$ is simple homotopy equivalent to $Y$, then $\overline{\textnormal{rank}}(X)=\overline{\textnormal{rank}}(Y)$.
\end{cor}
\begin{proof}
It is an immediate consequence of the proof of Theorem \ref{thm_weak_beat_point}.
\end{proof}

\begin{cor}
Let $X$ be a contractible finite $T_0$-space with more than one point. Then $\overline{\textnormal{rank}}(X)=1$.
\end{cor}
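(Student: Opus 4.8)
The plan is to combine the two facts already established about contractible spaces and the $\overline{\textnormal{rank}}$ invariant. First I would recall from the third point of Corollary \ref{cor:contractible_implies_determinant_zero} that a contractible space is, in particular, homotopy equivalent to the one-point space $\{\ast\}$, whose matrix is the $1\times 1$ zero matrix. By the preceding theorem on homotopy invariance of $\overline{\textnormal{rank}}$, we would then have $\overline{\textnormal{rank}}(X)=\overline{\textnormal{rank}}(\{\ast\})$.

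The remaining step is simply to compute $\overline{\textnormal{rank}}(\{\ast\})$. The matrix of the one-point space is $(0)$, which has rank $0$, so $\overline{\textnormal{rank}}(\{\ast\})=|\{\ast\}|-\textnormal{rank}(\{\ast\})=1-0=1$. This gives $\overline{\textnormal{rank}}(X)=1$.

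Alternatively, one could argue directly via the core. Since $X$ is contractible, its core is a single point, so after elementary row and column operations $X_M$ takes the block form $\bigl(\begin{smallmatrix} I_{n-1} & 0\\ 0 & \overline{X}_M\end{smallmatrix}\bigr)$ with $\overline{X}_M$ the $1\times 1$ zero matrix, exactly as in the proof of the homotopy-invariance theorem (compare Remark \ref{rem_contractible_no-recibe_operaciones}). The rank of this matrix is $n-1$, where $n=|X|$, so $\overline{\textnormal{rank}}(X)=n-(n-1)=1$.

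I do not anticipate a genuine obstacle here, since the statement is an immediate specialization of the homotopy-invariance theorem once one observes that the contractible hypothesis forces the core to be a point. The only mild subtlety worth flagging is the hypothesis that $X$ has more than one point, which merely guarantees $n\geq 2$ and hence that the identity block $I_{n-1}$ is nonempty; the conclusion $\overline{\textnormal{rank}}(X)=1$ holds regardless, and indeed also for the single point itself, but the stated restriction ensures we are in the non-degenerate reduction situation described above.
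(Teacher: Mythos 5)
Your proof is correct and is essentially the paper's own (implicit) argument: the corollary is stated there as an immediate consequence of the homotopy-invariance theorem for $\overline{\textnormal{rank}}$, applied with $Y$ the one-point space whose matrix $(0)$ gives $\overline{\textnormal{rank}}(Y)=1-0=1$. Your alternative via the core and the block form $\bigl(\begin{smallmatrix} I_{n-1} & 0\\ 0 & 0\end{smallmatrix}\bigr)$ is just the same reasoning unwound, and your observation that the "more than one point" hypothesis is not actually needed is accurate.
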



\begin{rem} Let $X$ be a finite $T_0$-space. If $Y\subseteq X$ is contractible, then $\text{rank}(X)\geq |Y|-1$.
\end{rem}

%

Finally, we adapt this notion for simplicial complexes. Given a finite simplicial complex $K$, define $\overline{\textnormal{rank}}(K):=\overline{\textnormal{rank}}(\mathcal{X}(K))$.

\begin{cor} Let $K$ and $L$ be two simplicial complexes. If $K$ and $L$ have the same simple homotopy type, then $\overline{\textnormal{rank}}(K)=\overline{\textnormal{rank}}(L)$.
\end{cor}
\begin{proof}
It is an immediate consequence of \cite[Corollary 3.11]{barmak2008simple} and Corollary \ref{cor_rank_simple_homotopy}.
\end{proof}

\subsection{Characteristic polynomial and eigenvalues}

We now turn our attention to characteristic polynomials and eigenvalues. Given a finite $T_0$-space and $X_M$, we denote its characteristic polynomial by $p_{X}(\lambda)$. In this case, these invariants are trivially not homotopy invariants because removing beat points produces polynomials with lower degree. Nevertheless, we study properties for some specific cases.
\begin{prop} Let $X$ be a finite $T_0$-space. Then $p_{X}(\lambda)=\lambda^{|X|}$ if and only if $X$ is a totally ordered set.
\end{prop}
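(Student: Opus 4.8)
The plan is to prove both implications by relating the characteristic polynomial to $\textnormal{tr}(X_M^2)$ and then invoking the dictionary between this trace and the antichains of $X$ that was established in Proposition \ref{prop:antichains_size_2} and Proposition \ref{prop_no_antichains_size_2_is_chain}. The first observation needed is that the characteristic polynomial $p_X(\lambda)=\det(\lambda I - X_M)$ does not depend on the chosen labelling: a relabelling replaces $X_M$ by a permutation-conjugate $EX_ME^{-1}$, and conjugation preserves the characteristic polynomial. Hence we are free to pick a convenient labelling in the forward direction.

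For the forward implication, if $X$ is a totally ordered set I would relabel its points so that $x_1<x_2<\cdots<x_n$. Then $x_i\leq x_j$ holds exactly when $i\leq j$, so by the definition of $X_M$ the entry $x_{i,j}$ vanishes for $i\leq j$ and equals $1$ for $i>j$; that is, $X_M$ is strictly lower triangular. A strictly lower triangular matrix has $\det(\lambda I - X_M)=\lambda^n$, giving $p_X(\lambda)=\lambda^n$ at once. For the converse, suppose $p_X(\lambda)=\lambda^{|X|}$. Then every eigenvalue of $X_M$ is $0$ (equivalently, by Cayley--Hamilton, $X_M$ is nilpotent), so the eigenvalues of $X_M^2$, being the squares of those of $X_M$, are all zero and $\textnormal{tr}(X_M^2)=0$. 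By Proposition \ref{prop:antichains_size_2} the number of antichains of size $2$ in $X$ equals $\textnormal{tr}(X_M^2)/2=0$, and Proposition \ref{prop_no_antichains_size_2_is_chain} then forces $X$ to be a totally ordered set.

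The only step requiring genuine care is the passage from $p_X(\lambda)=\lambda^n$ to $\textnormal{tr}(X_M^2)=0$: one must justify that a vanishing characteristic polynomial forces the second power sum of the eigenvalues to vanish. This is immediate from the eigenvalue description above, but if an entirely elementary route is preferred it also follows from Newton's identities, since the coefficients of $p_X$ (all zero except the leading one) determine the power sums $\textnormal{tr}(X_M)$ and $\textnormal{tr}(X_M^2)$. Everything else reduces to the combinatorial facts already proved, so no substantial obstacle remains.
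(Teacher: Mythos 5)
Your proof is correct and follows essentially the same route as the paper: the forward direction via the strictly lower triangular form of $X_M$ for a chain, and the converse via $p_X(\lambda)=\lambda^{|X|}\Rightarrow \textnormal{tr}(X_M^2)=0$ combined with Proposition \ref{prop:antichains_size_2} and Proposition \ref{prop_no_antichains_size_2_is_chain}. The only difference is that you spell out the steps the paper leaves implicit (invariance under relabelling, and the eigenvalue/Newton's-identities justification that a characteristic polynomial equal to $\lambda^n$ forces the second power sum to vanish), which is a welcome clarification rather than a departure.
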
 
\begin{proof}
If $X$ is a totally ordered set, then the result follows trivially. Suppose that $p_{X}(\lambda)=\lambda^{|X|}$. Then $\textnormal{tr}(X_M^2)=0$ and we get that $X$ is a totally ordered set by Proposition \ref{prop:antichains_size_2}. 
\end{proof}
We also have that given two finite $T_0$-spaces $X$ and $Y$ that are not homeomorphic, we may get $p_{X}(\lambda)=p_Y(\lambda)$. To see this, we only need to consider the opposite order of a finite $T_0$-space. 
\begin{ex} Consider $X=\{x_1,x_2,x_3\}$ such that $x_1,x_2<x_3$ and $X^o$, that is, $X$ with $x_3<x_1,x_2$. Then  $p_{X}(\lambda)=-\lambda^3+\lambda=p_{X^o}(\lambda)$.
\end{ex}
\begin{prop} Let $X^n=\{x_i\}$ be a fence of $n$ points, i.e., $x_1<x_2>...\leq x_n$. Then $p_{X^n}(\lambda)=(-1)^n\lambda(\lambda-(n-2))(\lambda+1)^{n-2}$.
\end{prop}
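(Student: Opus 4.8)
The plan is to exploit the fact that a fence has height $1$, which forces $X_M$ into an extremely rigid shape. First I would fix the sign convention $p_{X^n}(\lambda)=\det(X_M-\lambda I_n)$ (the computation $-\lambda^3+\lambda$ in the preceding example confirms this is the convention in use). Since $X^n$ has height $1$, for $i\neq j$ one has $x_i\le x_j$ if and only if $x_i\prec x_j$; hence, writing $A=(a_{i,j})$ for the $0/1$ matrix with $a_{i,j}=1$ exactly when $x_i\prec x_j$, the defining rule $x_{i,j}=0\iff x_i\le x_j$ gives the clean decomposition $X_M=1_{n\times n}-I_n-A$.

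Next I would record two scalar facts about $A$ that turn out to be all the input needed. Because there is no chain $x_i\prec x_j\prec x_k$ in a height-$1$ poset, we have $A^2=0$, so $A$ is nilpotent and every eigenvalue of $(1+\lambda)I_n+A$ equals $1+\lambda$; consequently $\det\big((1+\lambda)I_n+A\big)=(1+\lambda)^n$. Moreover $A$ has exactly one nonzero entry per covering relation, and the fence $x_1<x_2>\dots$ is a path with $n-1$ edges, so $1_{1\times n}\,A\,1_{n\times 1}=n-1$. \emph{The key conceptual point is that the alternating orientation of the fence is irrelevant:} only the nilpotency $A^2=0$ and the edge count $n-1$ survive, both of which are orientation-blind.

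Then I would treat the all-ones block $1_{n\times n}=1_{n\times 1}1_{1\times n}$ as a rank-one perturbation. Setting $\mu=1+\lambda$ and $M=-(\mu I_n+A)$, I write $X_M-\lambda I_n=M+1_{n\times 1}1_{1\times n}$ and apply the matrix determinant lemma in its adjugate form $\det(M+uv^\top)=\det M+v^\top\operatorname{adj}(M)\,u$, which is valid for every $M$ and so needs no invertibility hypothesis. From $A^2=0$ one gets $(\mu I_n+A)^{-1}=\mu^{-1}I_n-\mu^{-2}A$ for $\mu\neq0$, and multiplying by the determinant yields the polynomial identity $\operatorname{adj}(\mu I_n+A)=\mu^{n-1}I_n-\mu^{n-2}A$, which then holds for all $\mu$ by continuity. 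Feeding $\det M=(-1)^n\mu^n$ and $1_{1\times n}\operatorname{adj}(M)1_{n\times 1}=(-1)^{n-1}\mu^{n-2}\big(n\mu-(n-1)\big)$ into the lemma gives
\[
\det(X_M-\lambda I_n)=(-1)^n\mu^{n-2}\big(\mu^2-n\mu+(n-1)\big).
\]
Factoring $\mu^2-n\mu+(n-1)=(\mu-1)(\mu-(n-1))$ and substituting back $\mu-1=\lambda$, $\mu-(n-1)=\lambda-(n-2)$, and $\mu^{n-2}=(\lambda+1)^{n-2}$ produces exactly $(-1)^n\lambda(\lambda-(n-2))(\lambda+1)^{n-2}$.

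I expect the only real obstacle to be organizational rather than computational: recognizing that the whole problem collapses onto the two invariants $A^2=0$ and $1_{1\times n}A1_{n\times 1}=n-1$ once $1_{n\times n}$ is peeled off. The minor technical care is the passage through $\mu=0$ (that is, $\lambda=-1$, where $M$ is singular): using the adjugate form of the determinant lemma, or equivalently observing that both sides are polynomials in $\lambda$ agreeing off a finite set, removes this difficulty immediately.
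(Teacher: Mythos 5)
Your proof is correct, and it takes a genuinely different route from the paper's. The paper proceeds by induction on $n$: it appends the point $x_{n+1}$, performs explicit row (or, depending on the parity of $n$, column) operations on $X_M^{n+1}-\lambda I$, and reduces the recursion to the determinant of a tridiagonal matrix, with separate treatment of the odd and even cases. You instead work in closed form: the decomposition $X_M=1_{n\times n}-I_n-A$ (valid for \emph{any} height-$1$ poset, since then the strict order coincides with the covering relation), the nilpotency $A^2=0$, and the rank-one update $X_M-\lambda I_n=M+1_{n\times 1}1_{1\times n}$ handled by the adjugate form of the matrix determinant lemma; the scaling $\operatorname{adj}(M)=(-1)^{n-1}\bigl(\mu^{n-1}I_n-\mu^{n-2}A\bigr)$ and the final factorization all check out, including at the singular value $\mu=0$. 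What your argument buys is twofold: it eliminates the induction and the parity case analysis entirely, and it proves strictly more than the statement — the answer depends only on the two orientation-blind invariants $A^2=0$ and $1_{1\times n}A1_{n\times 1}=n-1$, so \emph{every} height-$1$ poset on $n$ points with exactly $n-1$ covering relations (any poset whose Hasse diagram is a tree, e.g.\ the star $[n-1]\circledast[1]$ as well as the fence) has the same characteristic polynomial. This sharpens the paper's own remark that the characteristic polynomial fails to distinguish non-homeomorphic spaces. What the paper's proof buys in exchange is that it is entirely elementary — nothing beyond Laplace expansion and row operations — whereas yours relies on the adjugate identity and a Zariski-density (or polynomial-identity) argument to dispense with invertibility.
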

\begin{proof}
Suppose $n=2$. Then it is simple to check that $p_{X^2}(\lambda)=\lambda^2$. Similarly, if $n=3$, then $p_{X^3}(\lambda)=-\lambda(\lambda-1)(\lambda+1)$. We argue by induction: suppose that $p_{X^n}(\lambda)=(-1)^n\lambda(\lambda-(n-2))(\lambda+1)^{n-2}$. By construction, $X^{n+1}$ is obtained by adding a point $x_{n+1}$ to $X^n$. If $n$ is an odd number, then $x_{n+1}>x_n$. Therefore, the $(n+1)$-th row (column) of $X^{n+1}_M$ is $(1,1,...,1,0)$ ($(1,1,...,1,0,0)^\intercal$). To compute $\det(X_M^{n+1}-\lambda I)$ subtract the $n$-row from $(n+1)$-th row to get that the resulting row is $(0,0,...,0,\lambda,-1-\lambda)$. Hence,
$$p_{X^{n+1}}(\lambda)=-1(\lambda+1)p_{X^n}(\lambda)-\lambda |A|,$$
where $A$ is a $n\times n$ matrix such that $(a_{i,j})_{i,j=1,...,n-1}=X_M^{n-1}-\lambda I$ and $a_{i,n}=a_{n,i}=1$ for $i=1,...,n$. To compute $|A|$, subtract the $n$-row from each row. Using the Laplace expansion along the $n$-th column, we get $|A|=|B|$, where $B$ is a tridiagonal matrix. Let $a_i$ denote the elements of the main diagonal, so $a_i=-\lambda-1$ for $i=1,...,n-1$. The upper diagonal (lower diagonal), denoted by $b_i$ ($c_i$), is given by $b_i=0$ ($c_i=-1$) if $i$ is odd and $b_i=-1$ ($c_i=0$) if $i$ is even where $i=1,...,n-2$. Hence, $|B|=(-\lambda-1)^{n-1}$. So putting it all together, one has 
\begin{align*}
p_{X^{n+1}}(\lambda)&=(-1)^{n+1}\lambda(\lambda-(n-2))(\lambda+1)^{n-1}+(-1)^{n} \lambda(\lambda+1)^{n-1}\\&=(-1)^{n+1}\lambda(\lambda+1)^{n-1}(\lambda-(n-1)).
\end{align*}
We may repeat similar arguments for the case that $n$ is an even number, in that case, we need to do column operations instead of row operations.
\end{proof}

This result tells that we might not expect to get results of divisibility between the characteristic polynomials of spaces that are homotopic.  In addition, we can produce contractible spaces with spectral radius as big as we want.

\begin{prop} Let $X$ be a finite $T_0$-space. If $x\in X$ is a maximum (minimum), then $p_{X}(\lambda)=-\lambda p_{X\setminus x}(\lambda)$.
\end{prop}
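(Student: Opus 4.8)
The plan is to choose a labelling in which the distinguished point is the last one, $x=x_n$, read off the $n$-th column (or row) of $X_M$ directly from the order relation, and then finish with a single Laplace (cofactor) expansion. I would treat the maximum case first and deduce the minimum case from it by passing to the opposite order.

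Suppose $x_n$ is a maximum. Since $x_i\leq x_n$ for every $i$, the definition of $X_M$ gives $x_{i,n}=0$ for all $i$, so the entire $n$-th column of $X_M$ vanishes. Consequently the $n$-th column of $X_M-\lambda I$ equals $-\lambda e_n$, i.e.\ its only nonzero entry is $-\lambda$ in position $(n,n)$. Expanding $\det(X_M-\lambda I)$ along this column, the only surviving term comes from the $(n,n)$ entry, whose cofactor sign is $(-1)^{n+n}=1$. Hence $p_{X}(\lambda)=-\lambda\cdot M_{n,n}$, where $M_{n,n}$ denotes the minor obtained by deleting the $n$-th row and column.

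The key observation is that this minor is precisely $p_{X\setminus x_n}(\lambda)$. Indeed, deleting the $n$-th row and column of $X_M$ leaves the $(n-1)\times(n-1)$ submatrix $(x_{i,j})_{1\leq i,j\leq n-1}$, and since the order relations among $x_1,\dots,x_{n-1}$ are unaffected by removing the point $x_n$, this submatrix is exactly $(X\setminus x_n)_M$. Deleting the corresponding row and column of $X_M-\lambda I$ therefore yields $(X\setminus x_n)_M-\lambda I$, whose determinant is $p_{X\setminus x_n}(\lambda)$. Combining the two steps gives $p_{X}(\lambda)=-\lambda\, p_{X\setminus x_n}(\lambda)$, as claimed.

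For the minimum case I would pass to the opposite order: by Proposition \ref{prop:orden_opuesto_matriz} the matrix of $X^{o}$ is $X_M^\intercal$, a minimum of $X$ is a maximum of $X^{o}$, and the characteristic polynomial is unchanged under transposition because $\det(X_M^\intercal-\lambda I)=\det(X_M-\lambda I)$; applying the maximum case to $X^{o}$ then yields the result. Alternatively one argues directly, noting that when $x_n$ is a minimum its $n$-th \emph{row} vanishes and one expands along that row instead. I expect no genuine obstacle here: the entire content lies in recognizing the vanishing column (or row) from the order relation and checking the cofactor bookkeeping, both of which are routine.
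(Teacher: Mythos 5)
Your proof is correct and follows exactly the paper's approach: the paper's own (one-line) proof also observes that a maximum (minimum) forces a null column (row) in $X_M$, so that expanding $\det(X_M-\lambda I)$ along that column (row) yields $-\lambda\,p_{X\setminus x}(\lambda)$. You have merely spelled out the cofactor bookkeeping and the identification of the minor with $(X\setminus x)_M-\lambda I$, which the paper leaves implicit.
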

\begin{proof}
Suppose that $X$ has a maximum (minimum). Then $X_M$ has a null column (or row), so $p_X(\lambda)=-\lambda p_{X\setminus x}(\lambda)$.
\end{proof}

\begin{rem} A deeper study of the eigenvalues of symmetric matrices related to those considered here is presented in \cite{chocano2025quadratic}, where embedding problems between finite $T_0$-spaces are also investigated.
\end{rem}

We define the characteristic polynomial of a finite simplicial complex $K$ as the characteristic polynomial of the matrix $\mathcal{X}(K)_M$. Similarly, we define its eigenvalues as those of $\mathcal{X}(K)_M$.

\section{Other aspects of the matrices from $\mathcal{M}$}\label{sec:other_aspects}

In this section, we study combinatorial aspects and analyze other questions about the matrices considered.

\subsection{Determinant of the sum of a topological space with the identity}

We can give a topological interpretation to $\det(X_M+I_{|X|\times |X|})$ for each finite $T_0$-space $X$. Recall that in general, given two square matrices $A$ and $B$ of dimension $n$, $\det(A+B)=\det(A)+\det(B)+\sum_{i=1}^{n-1} \Gamma_n^i\det(A/B^i)$, where $\Gamma_n^i\det(A/B^i)$ is defined as a sum of the combination of determinants, in which the $i$ rows of $A$ are substituted by the corresponding rows of $B$ (see \cite{detAmasB}). So in the current context, $\Gamma_{n}^i(X_M/I_{|X|\times |X|}^i)=\sum_{j=1}^n \det (X \setminus \{x_{j_1},...,x_{j_i}\})$ with $j_l\in \{1,...,n\}$ for every $j$ and $l$ and  $j_l\neq j_k$ for each $j=1,...,i$ and $l\neq k$. Therefore, $\Gamma_{n}^i(X_M/I_{|X|\times |X|}^i)$ is the sum of the determinants of the topological spaces obtained after removing $i$ points from $X$ and  $\det(X_M+I_{|X|\times |X|})=\det(X_M)+\sum_{Y\subset X}\det(Y_M)$. Particularly, we may interpret $\Gamma_{n}^{i}(X_M/I_{|X|\times |X|}^{i})$ as a sum of the different ways that appear in $X$ spaces of $n-i$ points with the same relations defined on $X$, that is, the number of different induced posets of $n-i$ points in $X$.

We present in the following result formulas for $\Gamma_{n}^i(X_M/I_{|X|\times |X|}^i)$ when $i$ is close to be $n$. The proof is omitted as it is straightforward.
\begin{prop} Let $X$ be a finite $T_0$-space of $n$ points. Then 
\begin{itemize}
\item $\Gamma_{n}^{n-1}(X_M/I_{|X|\times |X|}^{n-1})=0$.
\item $\Gamma_{n}^{n-2}(X_M/I_{|X|\times |X|}^{n-2})=-A_2$, where $A_2$ denotes the number of antichains of size $2$ in $X$.
\item $\Gamma_{n}^{n-3}(X_M/I_{|X|\times |X|}^{n-3})=L^3_2+2A_3$, where $A_3$ denotes the number of antichains of size $3$ in $X$ and $L^3_2$ denotes the number of posets in $X$ that are homeomorphic to the poset of three points $Y=\{y_1,y_2,y_3 \}$ defined by $y_1<y_2$.
\end{itemize}
\end{prop}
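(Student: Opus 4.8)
The plan is to use the interpretation recorded immediately above the statement: $\Gamma_n^{n-k}(X_M/I_{|X|\times |X|}^{n-k})$ equals the sum of the determinants $\det(Z_M)$ taken over all induced subposets $Z\subseteq X$ with $|Z|=k$. Thus each of the three assertions reduces to three steps: classify the isomorphism types of posets on $k$ points for $k=1,2,3$; compute $\det(Z_M)$ for each type; and weight each value by the number of induced subposets of $X$ realizing that type. By Theorem \ref{thm:matrices_grafos} the matrix $Z_M$ depends only on the isomorphism class of $Z$, so the determinant is well defined on types and this bookkeeping is legitimate.

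For $k=1$ the only poset is a single point, whose matrix is the $1\times 1$ zero matrix, so every summand vanishes and $\Gamma_n^{n-1}=0$. For $k=2$ there are exactly two isomorphism types: a comparable pair, whose matrix $\left(\begin{smallmatrix}0&0\\1&0\end{smallmatrix}\right)$ has determinant $0$, and an antichain, whose matrix $\left(\begin{smallmatrix}0&1\\1&0\end{smallmatrix}\right)$ has determinant $-1$. Summing, only the $A_2$ antichains of size two contribute, each with weight $-1$, which gives $\Gamma_n^{n-2}=-A_2$.

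The substantive case is $k=3$, for which there are exactly five isomorphism types: the $3$-chain, the two types with a single minimum and two maxima (``V'') and with two minima and a single maximum (``$\Lambda$''), the type $Y$ consisting of one relation together with an isolated point, and the $3$-antichain. Each of the first three possesses a global minimum or a global maximum, so by the observation in Section \ref{sec:invariantes} that such spaces have vanishing determinant, their contributions are $0$; this dispatches them cleanly without separate $3\times 3$ computations. For the two remaining types a short Laplace expansion gives $\det(Y_M)=1$, while the antichain has matrix $1_{3\times 3}-I_3$ with determinant $2$. Weighting by the number of induced copies---$L^3_2$ copies of $Y$ and $A_3$ antichains---yields $\Gamma_n^{n-3}=L^3_2+2A_3$.

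The argument carries no genuine obstacle beyond organizing the case analysis. The only point needing mild care is verifying that the classification of $3$-element posets is exhaustive and that the ``has a global maximum or minimum'' criterion separates exactly the three determinant-zero types from the two nonzero ones; indeed, in both $Y$ and the antichain there is neither a global maximum nor a global minimum, consistent with their nonvanishing determinants. Once the five types and their determinants are in hand, all three assertions follow immediately.
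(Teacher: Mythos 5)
Your proposal is correct and follows exactly the route the paper intends: the paper omits the proof as straightforward, but the interpretation of $\Gamma_n^{n-k}$ as the sum of $\det(Z_M)$ over induced $k$-point subposets $Z\subseteq X$ is stated in the paragraph preceding the proposition, and your classification of the $2$- and $3$-point poset types (using the remark that a space with a maximum or minimum has vanishing determinant to dispatch the chain, the ``V'' and the ``$\Lambda$''), together with the computations $\det(Y_M)=1$ and $\det(1_{3\times 3}-I_3)=2$, is precisely the bookkeeping the paper leaves to the reader.
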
 

It is clear that $\det(X_M+I_{|X|\times |X|})$ is not a homotopy invariant. For example, this number does not coincide for a fence and a totally ordered set. In fact, we have that this number can only be zero or one:
\begin{prop} Let $X$ be a finite $T_0$-space. 
\begin{enumerate}
\item Then $X$ is a totally ordered set if and only if $\det(X_M+I_{|X|\times |X|})=1$.
\item Then $\det(X_M+I_{|X|\times |X|})=0$ if and only if $X$ is not a totally ordered set. 
\end{enumerate}
\end{prop}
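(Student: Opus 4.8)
The plan is to prove both statements simultaneously by showing that $\det(X_M + I_{|X|\times|X|})$ is always either $0$ or $1$, and that it equals $1$ precisely when $X$ is totally ordered; the two displayed equivalences then follow as complements of one another.

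First I would exploit the freedom to relabel the points of $X$. Since relabelling replaces $X_M$ by $E X_M E^{-1}$ for a permutation matrix $E$, it replaces $A := X_M + I_{|X|\times|X|}$ by $E A E^{-1}$, leaving $\det A$ unchanged. I would therefore fix a labelling $x_1,\dots,x_n$ coming from a linear extension of the poset, i.e.\ one for which $x_i < x_j$ forces $i < j$ (every finite poset admits such a topological ordering). Recalling that $(X_M)_{i,j}=0$ exactly when $x_i\le x_j$, the entry $A_{i,j}$ equals $0$ iff $x_i < x_j$ and equals $1$ otherwise. With the chosen labelling $x_i < x_j \Rightarrow i<j$, so $A_{i,j}=0$ can occur only for $i<j$. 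Consequently $A$ has every entry on and below its main diagonal equal to $1$, while the strictly upper entries lie in $\{0,1\}$.

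Next I would perform the determinant-preserving row reduction that subtracts each row from the one directly beneath it. Concretely, I left-multiply $A$ by the lower bidiagonal unimodular matrix $T$ with $T_{i,i}=1$ and $T_{i,i-1}=-1$, so that $\det(TA)=\det A$ and $(TA)_{i,j}=A_{i,j}-A_{i-1,j}$ for $i\ge 2$, with the first row unchanged. For $j<i$ both $A_{i,j}$ and $A_{i-1,j}$ equal $1$, so $(TA)_{i,j}=0$: the matrix $TA$ is upper triangular, whence $\det A = \prod_{i=1}^{n}(TA)_{i,i}$. The first diagonal entry is $1$, and for $i\ge 2$ one has $(TA)_{i,i}=1-A_{i-1,i}$, which equals $1$ when $x_{i-1}<x_i$ and $0$ otherwise.

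Therefore $\det A$ is $1$ if $x_{i-1}<x_i$ holds for every $i=2,\dots,n$, that is, exactly when $x_1<x_2<\cdots<x_n$ is a chain, equivalently when $X$ is totally ordered, and $\det A=0$ in every other case. This yields statement (1), and statement (2) follows at once since $\det A$ takes no values besides $0$ and $1$. The only point demanding genuine care is the reduction step: I must verify that subtracting adjacent rows really annihilates the entire strictly lower triangle (which rests on the ``all ones on and below the diagonal'' shape forced by the linear extension) and then read off the diagonal entries correctly, since it is precisely the comparabilities $x_{i-1}<x_i$ of consecutive labels that survive as the diagonal of $TA$.
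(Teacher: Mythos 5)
Your proof is correct, but it takes a genuinely different route from the paper's. The paper argues by contradiction and induction: it observes that if $X$ has two maximal (resp.\ minimal) points then $X_M+I$ has two equal all-ones rows (resp.\ columns), hence zero determinant, and otherwise it peels off the maximum or minimum by a Laplace expansion, reducing to $(X\setminus x)_M+I_{|X|-1\times |X|-1}$ and iterating until two maximal or minimal points must appear. You instead work directly: conjugating by a permutation matrix to realize a linear extension (which fixes $I$, so $\det(X_M+I)$ is unaffected), you force $A=X_M+I$ to have all ones on and below the diagonal, and then the unimodular row operation $r_i\mapsto r_i-r_{i-1}$ triangularizes $A$ with diagonal entries $1$ and $1-A_{i-1,i}\in\{0,1\}$. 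This buys you more than the paper's argument states explicitly: you get the closed formula $\det(X_M+I)=\prod_{i=2}^{n}\bigl(1-A_{i-1,i}\bigr)$ along any linear extension, which simultaneously shows the determinant only ever takes the values $0$ and $1$ and that it equals $1$ exactly when consecutive elements of the extension form a chain, i.e.\ when $X$ is totally ordered; both parts of the proposition drop out at once, with no case analysis and no contradiction. The paper's approach, by contrast, stays closer to the poset-theoretic structure (maxima, minima, and their removal) and reuses the Laplace-expansion technique employed elsewhere in the paper, but it handles the two statements by separate (if similar) arguments. Your one point requiring care --- that the subtraction of adjacent rows annihilates the strict lower triangle --- is exactly guaranteed by the linear-extension normalization, so the argument is complete as written.
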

\begin{proof}
(1) If $X$ is a totally ordered set, the result follows easily. Assume $\det(X_M+I_{|X|\times |X|})=1$. We argue by contradiction. Suppose that $X$ is not a totally ordered set. If $X$ has two maximal or minimal points, then it is clear that $\det(X_M+I_{|X|\times |X|})$ should be zero because we obtain two rows or two columns that are equal. If $X$ has a maximum (or minimum) $x$, then we use the Laplace expansion along the column (or row) that corresponds to $x$. The resulting determinant is equal to the determinant of $(X\setminus x)_{M}+I_{|X|-1\times |X|-1}$. If $X\setminus x$ has two maximal or minimal points we have a contradiction. If $X\setminus$ has a maximum or minimum, we can repeat the previous argument. Since $X$ is not a totally ordered set, after some steps we obtain that $\det(X_M+I_{|X|\times |X|})=\det(X\setminus \{x_{i_1},x_{i_2},...,x_{i_k} \}_M+I_{|X|-k\times |X|-k})=0$, which entails the contradiction.

(2) Knowing that if $X$ has two maximal or minimal elements then $\det(X_M+I_{|X|\times |X|})=0$, we only need to repeat the previous arguments, removing the maximum (or minimum) from $X$ if it exists.
\end{proof}

\subsection{Group actions}
We assume that the finite $T_0$-spaces of this subsection admit the free action of a finite group and obtain their matrices.
\begin{thm}\label{thm:group_action}
Consider a finite $T_0$-space $X$ and a finite group $G=\{g_1,...,g_m\}$, where $g_1$ is the identity element. Suppose that $X$ admits a $G$-free action. Then $X_M$ is a block matrix $\{A_{i,j}\}_{i,j=1,...,m}$, where $A_{i,j}$ has dimension $\frac{|X|}{m} \times \frac{|X|}{m}$, satisfying that $A_{i,j}=A_{1,s}$, where $g_s=g_jg_i^{-1}$, for every $i,j=1,...,m$. 
\end{thm}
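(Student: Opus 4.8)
The plan is to use the $G$-free action to set up a labelling of $X$ that organizes the points into orbits, and then read off the block structure of $X_M$ directly from the definition of the matrix together with the equivariance of the order relation. Since the action is free, every orbit has exactly $m=|G|$ points, so $|X|$ is divisible by $m$ and we may write $|X|=mk$ with $k=|X|/m$ the number of orbits. Choose orbit representatives $p_1,\dots,p_k$ and label the points of $X$ as $g_i\cdot p_l$. The key grouping, however, is the opposite one: I would partition the points into $m$ blocks indexed by the group element, so that block $i$ consists of the points $\{g_i\cdot p_1,\dots,g_i\cdot p_k\}$. With this ordering of the $mk$ points, the $(i,j)$ block $A_{i,j}$ of $X_M$ records, for $s,t\in\{1,\dots,k\}$, whether $g_i\cdot p_s \leq g_j\cdot p_t$.

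The heart of the argument is then a single equivariance observation: because $G$ acts by order-preserving homeomorphisms (each $g\in G$ acts as a self-homeomorphism of $X$, hence as an order isomorphism of the poset), we have
$$g_i\cdot p_s \leq g_j\cdot p_t \iff p_s \leq (g_i^{-1}g_j)\cdot p_t.$$
Writing $g_s = g_j g_i^{-1}$ as in the statement (so that $g_i^{-1}g_j = g_i^{-1}(g_s g_i) $, and one checks the indexing matches up to the chosen convention), the entries of $A_{i,j}$ depend only on the group element $g_i^{-1}g_j$, and in particular the block $A_{i,j}$ equals the block $A_{1,s}$ computed from the identity representative $g_1=e$ against $g_s$. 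More precisely, by the definition of $X_M$ the $(s,t)$ entry of $A_{i,j}$ is $0$ exactly when $g_i\cdot p_s\leq g_j\cdot p_t$, which by the displayed equivalence is exactly the condition determining the $(s,t)$ entry of $A_{1,s}$ with $g_s=g_jg_i^{-1}$. Since this holds for all $s,t$, the two blocks coincide.

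I would organize the proof as follows: first, record that freeness gives orbits of uniform size $m$ and fix the orbit-representative labelling and the block decomposition described above; second, invoke that each group element acts as a homeomorphism, hence (by the Preliminaries) as an order isomorphism, to justify the equivalence $g_i\cdot p_s\leq g_j\cdot p_t\iff p_s\leq (g_i^{-1}g_j)\cdot p_t$; third, translate this equivalence through the definition $x_{a,b}=0\iff x_a\leq x_b$ to conclude the entrywise equality $A_{i,j}=A_{1,s}$ with $g_s=g_jg_i^{-1}$.

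The main obstacle I anticipate is purely bookkeeping: reconciling the left-action convention with the specific index relation $g_s=g_jg_i^{-1}$ asserted in the statement, since a left action naturally produces the group element $g_i^{-1}g_j$ rather than $g_jg_i^{-1}$. Resolving this requires committing to one convention for how $G$ acts and being careful about whether the block index runs over the group element applied to the representative or its inverse; a clean fix is to define block $i$ as $\{g_i^{-1}\cdot p_l\}_l$, or equivalently to use a right action, so that the composed group element comes out as $g_jg_i^{-1}$ exactly. Once the convention is pinned down, no genuine computation remains, which is why the paper can reasonably present the block identity as an immediate consequence of equivariance.
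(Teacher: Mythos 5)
Your proposal is correct and follows essentially the same route as the paper's proof: choose orbit representatives (the paper calls them a fundamental domain), label each block by a fixed group element applied to those representatives, and use the fact that each $g\in G$ acts as an order isomorphism to identify every block $A_{i,j}$ with a block in the first row. The convention issue you flag is real, and the paper resolves it exactly as you suggest: its labelling rule $g_i(x_k^{g_l})=x_k^{g_l g_i}$ is precisely the right-action convention, which is why the composed element comes out as $g_jg_i^{-1}$ rather than $g_i^{-1}g_j$.
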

\begin{proof}
Consider a fundamental domain $D=\{x_0,...,x_{m-1} \}\subset X$. Label the elements of $X$ as follows $\{x_0^{g_1},...,x_{m-1}^{g_1},x_{0}^{g_2},...,x_{m-1}^{g_2},...,x_{0}^{g_m},...,x_{m-1}^{g_m} \}$, where $g_i(x_k^{g_l})=x_k^{g_lg_i}$. We have that $x_{l}^{g_i}\leq x_{k}^{g_j}$ if and only if $g_i(x_l^{g_1})\leq g_i(x_k^{g_{j}g_i^{-1}})$ if and only if $x_l^{g_1}\leq x_k^{g_jg_i^{-1}}$.  From this, we deduce the result.
\end{proof}
Note that in the hypothesis of Theorem \ref{thm:group_action}, the matrix $X_M$ is a Latin block square matrix. For finite $T_0$-spaces that admit a $\mathbb{Z}_2$-free action, we have the following simple description.


\begin{cor}
Let $X$ be a finite $T_0$-space. Suppose that $X$ admits a $\mathbb{Z}_2$-free action. Then $X_M$ is a block matrix $(A_{i,j})_{i,j=1,2}$ such that $A_{1,1}=A_{2,2}$ and $A_{1,2}=A_{2,1}$, where $A_{i,j}$ has dimension $\frac{|X|}{2}\times \frac{|X|}{2}$. Particularly, $\det (X_M)=\det(A_{1,1}+A_{1,2})\det(A_{1,1}-A_{1,2})$.
\end{cor}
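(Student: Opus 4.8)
The plan is to apply Theorem \ref{thm:group_action} with $G=\mathbb{Z}_2=\{g_1,g_2\}$ and then perform the block determinant computation. First I would specialize the general block structure to the case $m=2$: the theorem gives $A_{i,j}=A_{1,s}$ where $g_s=g_jg_i^{-1}$. Running through the four index pairs in $\mathbb{Z}_2$ (where every element is its own inverse and the group is abelian), I get $g_2g_1^{-1}=g_2$, $g_1g_2^{-1}=g_2$, $g_1g_1^{-1}=g_1$, and $g_2g_2^{-1}=g_1$. Hence $A_{1,1}=A_{1,1}$, $A_{2,2}=A_{1,1}$, $A_{1,2}=A_{1,2}$, and $A_{2,1}=A_{1,2}$, which is precisely the claimed symmetry $A_{1,1}=A_{2,2}$ and $A_{1,2}=A_{2,1}$. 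The dimension statement follows since each block has size $\tfrac{|X|}{m}\times\tfrac{|X|}{m}=\tfrac{|X|}{2}\times\tfrac{|X|}{2}$.

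For the determinant formula I would write $X_M$ as the $2\times 2$ block matrix
$$X_M=\begin{pmatrix} A_{1,1} & A_{1,2}\\ A_{1,2} & A_{1,1}\end{pmatrix}$$
and exploit the symmetry to block-diagonalize it by elementary block row and column operations, which do not change the determinant. Concretely, adding the second block row to the first produces a first block row $(A_{1,1}+A_{1,2},\ A_{1,2}+A_{1,1})$, and then subtracting the first block column from the second clears the top-right corner. The standard outcome of this procedure for a block matrix of the form $\left(\begin{smallmatrix} P & Q\\ Q & P\end{smallmatrix}\right)$ is that its determinant equals $\det(P+Q)\det(P-Q)$, giving $\det(X_M)=\det(A_{1,1}+A_{1,2})\det(A_{1,1}-A_{1,2})$.

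The only subtlety—and the step I would be most careful about—is justifying that these block operations legitimately factor the determinant, since one is manipulating matrices rather than scalars. The clean way is to observe that the two diagonal blocks commute with the off-diagonal blocks trivially here because the relevant combinations $A_{1,1}\pm A_{1,2}$ are formed from the same two matrices; more robustly, one can invoke the identity
$$\begin{pmatrix} I & I\\ I & -I\end{pmatrix}\begin{pmatrix} A_{1,1} & A_{1,2}\\ A_{1,2} & A_{1,1}\end{pmatrix}\begin{pmatrix} I & I\\ I & -I\end{pmatrix}=\begin{pmatrix} 2(A_{1,1}+A_{1,2}) & 0\\ 0 & 2(A_{1,1}-A_{1,2})\end{pmatrix}\cdot\tfrac{1}{?}$$
so that after accounting for the determinant of the conjugating matrix the factors of $2$ cancel and one recovers $\det(A_{1,1}+A_{1,2})\det(A_{1,1}-A_{1,2})$ exactly. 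Since this is a completely standard fact about symmetric block matrices, I expect no genuine obstacle; the corollary is essentially a direct reading of the $m=2$ case of the preceding theorem together with this well-known determinant identity.
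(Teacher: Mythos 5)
Your proof is correct and takes exactly the route the paper intends: the paper states this corollary without proof as an immediate consequence of Theorem \ref{thm:group_action} specialized to $G=\mathbb{Z}_2$, together with the classical identity $\det\left(\begin{smallmatrix} P & Q\\ Q & P\end{smallmatrix}\right)=\det(P+Q)\det(P-Q)$, which your unimodular block row/column operations establish correctly and without needing any commutativity of the blocks. The only blemish is cosmetic: the stray factor ``$\cdot\tfrac{1}{?}$'' in your conjugation identity is unnecessary, since with $S=\left(\begin{smallmatrix} I & I\\ I & -I\end{smallmatrix}\right)$ one has exactly $S X_M S=\operatorname{diag}\bigl(2(A_{1,1}+A_{1,2}),\,2(A_{1,1}-A_{1,2})\bigr)$ and $(\det S)^2=2^{|X|}$, so the powers of $2$ cancel just as you said.
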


Combining Proposition \ref{prop:sumas_elementos_filas_columnas} and Theorem \ref{thm:group_action}, we get:
\begin{prop} Let $X$ be a finite $T_0$-space that admits a $G$-free action. Then $\sum_{i=1}^{|X|}U_{x_i}$ and $\sum_{i=1}^{|X|}F_{x_i}$ are multiples of $|G|$.
\end{prop}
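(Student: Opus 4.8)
<br>

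The plan is to combine the block structure from Theorem~\ref{thm:group_action} with the row/column sum formulas from Proposition~\ref{prop:sumas_elementos_filas_columnas}. Recall that Proposition~\ref{prop:sumas_elementos_filas_columnas} gives $\sum_{k=1}^{n} x_{k,i} = n - |U_{x_i}|$ for the column sums and $\sum_{k=1}^{n} x_{i,k} = n - |F_{x_i}|$ for the row sums, where $n = |X|$. Since $n$ itself is a multiple of $|G|$ (the action is free, so $|X| = |G|\cdot|D|$ for a fundamental domain $D$), it suffices to show that the total sum $\sum_{i=1}^{n} |U_{x_i}|$ of all column sums, and similarly $\sum_{i=1}^{n} |F_{x_i}|$, are multiples of $|G|$. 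Equivalently, since $\sum_i (n - |U_{x_i}|)$ is the sum of \emph{all} entries of $X_M$, I would show that the total number of $1$'s in $X_M$ is a multiple of $|G|$, and then the congruence for $\sum_i |U_{x_i}|$ follows because it equals $n^2$ minus that total, both terms being multiples of $|G|$.

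First I would fix the labelling from the proof of Theorem~\ref{thm:group_action}, writing $X = \{x_k^{g_i}\}$ with $x_k$ ranging over the fundamental domain and $g_i$ over $G$, and recall the key identity established there: $x_l^{g_i} \leq x_k^{g_j}$ if and only if $x_l^{g_1} \leq x_k^{g_j g_i^{-1}}$. The upshot is that the order relation depends only on the ``difference'' $g_j g_i^{-1}$ of the group labels, so the action of $G$ on $X$ is order-preserving and hence by a homeomorphism. The essential observation is that the free $G$-action partitions $X$ into orbits, each of size exactly $|G|$, and $G$ acts by automorphisms of the poset. Then I would argue that $U_{x_i}$ and $U_{g\cdot x_i}$ have the same cardinality for every $g \in G$, because the homeomorphism induced by $g$ sends $U_{x_i}$ bijectively onto $U_{g\cdot x_i}$ (an order automorphism preserves minimal open sets). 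Therefore $\sum_{i=1}^{n} |U_{x_i}|$ breaks up into a sum over orbits, and within each orbit all $|G|$ terms are equal, making each orbit's contribution a multiple of $|G|$; summing over orbits keeps it a multiple of $|G|$. The identical argument with $F_{x_i}$ in place of $U_{x_i}$ handles the second assertion.

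The main obstacle, and the point that requires care, is justifying that $|U_{x_i}| = |U_{g\cdot x_i}|$ for every $g$. This is really the statement that the $G$-action is by poset automorphisms, which I would extract cleanly from the displayed equivalence in the proof of Theorem~\ref{thm:group_action}: applying $g_i$ to $x_l^{g_1} \leq x_k^{g_j g_i^{-1}}$ yields $x_l^{g_i} \leq x_k^{g_j}$, exhibiting $g_i$ as order-preserving, and since $g_i^{-1}$ is also in $G$ the map is an order isomorphism. Once the action is known to be by automorphisms, the invariance of $|U_{x}|$ under the action is immediate, and the orbit-counting argument is routine. I would then conclude that both $\sum_i |U_{x_i}|$ and $\sum_i |F_{x_i}|$ are multiples of $|G|$, completing the proof.
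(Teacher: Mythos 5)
Your proof is correct, but its engine is different from the paper's. The paper's one-line proof intends the purely matrix-theoretic combination: by Theorem \ref{thm:group_action}, $X_M$ is a block Latin square in which each block $A_{1,s}$ occurs exactly $|G|$ times (once in every block row and once in every block column, since $g_jg_i^{-1}=g_s$ has a unique solution $j$ for each $i$ and vice versa), so the total number of $1$'s in $X_M$ is a multiple of $|G|$; Proposition \ref{prop:sumas_elementos_filas_columnas} then converts this count into $\sum_i(|X|-|U_{x_i}|)$, respectively $\sum_i(|X|-|F_{x_i}|)$, and since $|X|$ is a multiple of $|G|$ by freeness, the claim follows --- this is exactly the reduction you sketch in your first paragraph but never carry out. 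Instead, your second paragraph replaces the entry count by a direct orbit argument: $G$ acts by order automorphisms, hence $|U_{g\cdot x}|=|U_x|$ for all $g$, and $\sum_i |U_{x_i}|$ splits into orbit sums of $|G|$ equal terms. This is correct: freeness gives orbits of size exactly $|G|$, and the automorphism property is either part of what a $G$-action on a space means or can be read off the displayed equivalence in the proof of Theorem \ref{thm:group_action}, as you note. It also bypasses the matrix entirely; in fact it makes Proposition \ref{prop:sumas_elementos_filas_columnas} and your whole first-paragraph reduction to the number of $1$'s in $X_M$ superfluous, so you could delete that paragraph without loss. What each approach buys: yours is more elementary and isolates the stronger, more transparent fact that $x\mapsto |U_x|$ and $x\mapsto |F_x|$ are constant on orbits; the paper's stays inside the matrix framework the section is built around, where that same invariance appears as the repetition of the blocks $A_{1,s}$ along block rows and block columns.
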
 

It is easy to deduce that the minimal finite model of the $n$-sphere $S^n$ satisfies $\sum_{x\in S^n}|U_x|=\sum_{x\in S^n}|F_x|=2(\sum_{i=0}^n 2i+1)=2 (n+1)^2$. This is in concordance with the fact that every minimal finite model of a sphere  admits a $\mathbb{Z}_2$-free action (the antipodal map), that interchanges the points of the same height.  

\subsection{Sums of columns and rows as topological invariants}

Let $X$ be a finite $T_0$-space. We define $R(X)$ as the sum of the row entries in $X_M$, $R(X)=\sum_{i=1}^n r_i$. Similarly, we define $C(X)$ as the sum of the column entries in $X_M$, $C(X)=\sum_{i=1}^n c_i$. In addition, we can consider the sum of all entries in $X_M$, denoted by $\sum(X)$. The following result is a direct consequence of these definitions.

\begin{prop} Let $X$ and $Y$ be finite $T_0$-spaces. If $X$ is homeomorphic to $Y$, then $R(X)$, $C(X)$ and $R(Y)$ ,$C(Y)$ are equal respectively up to some permutations of their entries, and $\sum(X)=\sum(Y)$. 
\end{prop}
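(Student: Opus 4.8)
The plan is to unwind the definitions and reduce everything to the single fact established in Theorem \ref{thm:matrices_grafos}: if $X$ is homeomorphic to $Y$, then $X_M$ and $Y_M$ determine the same element of $\mathcal{M}$, i.e.\ there is a permutation matrix $E$ with $EX_ME^{-1}=Y_M$. First I would fix labellings $X=\{x_i\}$ and $Y=\{y_i\}$, write $X_M=(x_{i,j})$ and $Y_M=(y_{i,j})$, and let $\tau$ be the permutation associated to $E$, so that conjugating by $E$ simultaneously permutes the rows and columns: concretely $y_{i,j}=x_{\tau^{-1}(i),\tau^{-1}(j)}$ for all $i,j$.

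From this single equation I would read off the three claims. For the column and row sums, recall from Proposition \ref{prop:sumas_elementos_filas_columnas} that the $i$-th row sum of $X_M$ equals $n-|F_{x_i}|$ and the $i$-th column sum equals $n-|U_{x_i}|$. The vector $R(X)$ is, by definition, the sum of the row vectors $r_i$, so its entries are the column sums of $X_M$; likewise the entries of $C(X)$ are the row sums. Since conjugation by $E$ permutes rows and columns by the same $\tau$, each column sum of $Y_M$ equals the $\tau^{-1}(\cdot)$-th column sum of $X_M$, and similarly for row sums. Hence the multisets of entries of $R(X)$ and $R(Y)$ agree, and likewise for $C(X)$ and $C(Y)$, which is exactly the assertion that they coincide up to a permutation of their entries (the permutation being $\tau$).

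For the total-sum statement I would simply note that $\sum(X)$ is the sum of all entries of $X_M$, and conjugation by a permutation matrix rearranges the entries without changing their multiset, so the grand total is invariant: $\sum(X)=\sum(Y)$. Equivalently, $\sum(X)$ equals the common value $\sum_i R(X)_i=\sum_i C(X)_i$, and the previous paragraph already shows each of these sums is preserved.

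I do not expect any genuine obstacle here, since the result is flagged in the text as ``a direct consequence of these definitions''; the only care needed is bookkeeping about which index-set gets permuted and confirming that conjugation by a permutation matrix acts on rows and columns by the \emph{same} permutation (so that the diagonal-zero and $T_0$ structure is respected and the entry multiset is genuinely preserved). The mild subtlety worth stating explicitly is that ``equal up to some permutations of their entries'' should be read as ``there is a single permutation $\tau$ simultaneously matching the entries of $R(X)$ with those of $R(Y)$ and of $C(X)$ with those of $C(Y)$,'' which follows because the one permutation matrix $E$ governs both the row and the column relabelling at once.
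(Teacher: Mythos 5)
Your proof is correct and follows exactly the route the paper intends: the paper omits the proof as ``a direct consequence of these definitions,'' and the intended argument is precisely yours --- use Theorem \ref{thm:matrices_grafos} to get a single permutation matrix $E$ with $EX_ME^{-1}=Y_M$, observe that conjugation permutes rows and columns by the same permutation, and read off that row sums, column sums, and the total sum are preserved up to that permutation. Your explicit remark that one permutation simultaneously matches the entries of $R$ and of $C$ is a worthwhile clarification, not a deviation.
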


It is clear that the previous invariants do not classify finite $T_0$-space. For instance, consider $X=\{x_1,x_2,x_3,x_4\}$ where $x_1<x_3,x_4$; $x_2<x_3,x_4$ and $x_3<x_4$, and $Y=\{y_1,y_2,y_3,y_4\}$ where $y_1<y_3,y_4$; $y_2<y_4$ and $y_3<y_4$. $R(X)=R(Y)$, but $X$ is not homeomorphic to $Y$. This invariant only classifies completely finite spaces of at most $3$ points. Furthermore, in \cite{quasis1966sharp}, there is an example of two finite $T_0$-spaces of $6$ points that are not homeomorphic, but they agree on $R(\cdot)$ and $C(\cdot)$. On the other hand, 

\begin{thm} Let $X$ and $Y$ be minimal finite spaces. If $X$ is homotopy equivalent to $Y$, then $R(X)=R(Y)$, $C(X)=C(Y)$ and $\sum(X)=\sum(Y)$.
\end{thm}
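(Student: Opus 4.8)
The plan is to reduce the statement to Stong's theorem on cores (Theorem~\ref{thm:stong_cores_homeomorphic}) together with the observation that the quantities $R(X)$, $C(X)$, and $\sum(X)$ are invariant under the permutation equivalence defining $\mathcal{M}$. Since $X$ and $Y$ are already minimal finite spaces, they are their own cores; hence if $X$ is homotopy equivalent to $Y$, Theorem~\ref{thm:stong_cores_homeomorphic} tells us that $X$ and $Y$ are in fact homeomorphic. The whole theorem then collapses to a homeomorphism invariance statement.

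First I would invoke Theorem~\ref{thm:stong_cores_homeomorphic}: minimal finite spaces have themselves as cores (no beat points to remove), so homotopy equivalence of $X$ and $Y$ forces $X\cong Y$ as topological spaces. Next I would appeal to the already-stated invariance under homeomorphism. By the construction of $X_M$ and the definition of $\mathcal{M}$, a homeomorphism $g:X\to Y$ corresponds to a permutation matrix $E$ with $EX_ME^{-1}=Y_M$ (this is exactly the content of the proof of Theorem~\ref{thm:matrices_grafos}). Conjugating by a permutation matrix simultaneously permutes rows and columns by the same permutation $\tau$. The multiset of row sums is therefore preserved (the $i$-th row sum of $X_M$ equals the $\tau(i)$-th row sum of $Y_M$), and likewise for column sums; and the total sum of all entries is visibly unchanged. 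This gives $R(X)=R(Y)$, $C(X)=C(Y)$, and $\sum(X)=\sum(Y)$ as desired. Indeed this is precisely the content of the Proposition immediately preceding the theorem, stated there for homeomorphic spaces, so one could simply cite it.

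Concretely, the argument runs: $X$ homotopy equivalent to $Y$ $\Rightarrow$ (both minimal, Theorem~\ref{thm:stong_cores_homeomorphic}) $X$ homeomorphic to $Y$ $\Rightarrow$ (preceding Proposition on homeomorphism invariance) $R(X)=R(Y)$, $C(X)=C(Y)$, $\sum(X)=\sum(Y)$.

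I do not anticipate any genuine obstacle here: the theorem is essentially a corollary of Stong's rigidity for minimal finite spaces, and the numerical invariants are manifestly preserved under the conjugation by a permutation matrix that a homeomorphism induces. The only point requiring mild care is to note explicitly that a minimal finite space is its own core, so that homotopy equivalence upgrades to homeomorphism; once that is recorded, the rest is immediate from results already established in the excerpt.
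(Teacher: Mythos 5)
Your proof is correct and is precisely the argument the paper intends (the theorem is stated there without proof, as an immediate consequence of the surrounding results): a minimal finite space is its own core, so Stong's theorem upgrades the homotopy equivalence to a homeomorphism, and the preceding proposition on homeomorphism invariance of $R(\cdot)$, $C(\cdot)$ and $\sum(\cdot)$ finishes the job. Your explicit note that permutation conjugation $EX_ME^{-1}=Y_M$ preserves the multisets of row and column sums and the total sum is exactly the right justification for that proposition as well.
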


Observe that the spaces in the previous examples are not minimal finite spaces. Consider $X=\{x_1,x_2,x_3,x_4,x_5,x_6,x_7,x_8 \}$ where $x_1<x_5>x_2<x_6>x_3<x_7>x_4<x_8>x_1$ and $Y=\{y_1,y_2,y_3,y_4,y_5,y_6,y_7,y_8 \}$ where $y_1<y_5>y_2<y_6>y_1$ and $y_3<y_7>y_4<y_8>y_3$. It is evident that both $X$ and $Y$ are minimal finite spaces because they do not have beat points. Specifically, $X$ is a finite model of the circle and $Y$ is a finite model of the disjoint union of two circles. Although $X$ is not homeomorphic to $Y$,  $R(X)=R(Y)$, $C(X)=C(Y)$ and $\sum(X)=\sum(Y)$. 

\bibliography{bibliografia}
\bibliographystyle{plain}

\newcommand{\Addresses}{{ additional braces for segregating \footnotesize
  \bigskip
  \footnotesize

  \textsc{ P.J. Chocano, Departamento de Matemática Aplicada,
Ciencia e Ingeniería de los Materiales y
Tecnología Electrónica, ESCET
Universidad Rey Juan Carlos, 28933
Móstoles (Madrid), Spain}\par\nopagebreak
  \textit{E-mail address}:\texttt{pedro.chocano@urjc.es}

}}

\Addresses

\end{document}